\def\l{\left}
\def\r{\right}
\def\bg{\bigg}
\def\({\bg(}
\def\){\bg)}
\def\t{\text}
\def\f{\frac}
\def\ord{{\rm ord}}
\def\eq{\equiv}
\def\Z{\mathbb Z}
\def\C{\mathbb C}
\def\N{\mathbb N}
\def\<{\langle}
\def\>{\rangle}
\def\1{{\bf 1}}
\theoremstyle{plain}
\newtheorem{theorem}{Theorem}[section]
\newtheorem{conjecture}{Conjecture}
\newtheorem{lemma}{Lemma}[section]
\theoremstyle{definition}
\newtheorem*{Acks}{Acknowledgments}
\theoremstyle{remark}
\numberwithin{equation}{section}
\begin{document}
\title{Supercongruences arising from a ${}_7F_6$ hypergeometric transformation formula}
\author[Chen Wang]{Chen Wang}
\address[Chen Wang]{Department of Applied Mathematics, Nanjing Forestry University, Nanjing 210037, People's Republic of China}
\email{cwang@smail.nju.edu.cn}

\begin{abstract}
Using a ${}_7F_6$ hypergeometric transformation formula, we prove two supercongruences. In particular, one of these supercongruences confirms a recent conjecture of Guo, Liu and Schlosser, and gives an extension of a supercongruence of Long and Ramakrishna.
\end{abstract}

\keywords{hypergeometric series; hypergeometric transformation formula; supercongruences; $p$-adic Gamma functions}
\subjclass[2020]{Primary 33C20, 11A07; Secondary 11B65, 05A10}

\maketitle

\section{Introduction}

For any $n\in\N=\{0,1,2,\ldots\}$, let $(x)_n=x(x+1)\cdots(x+n-1)$ denote the Pochhammer symbol. For $r\in\N$ and $a_0,\ldots,a_r,b_1,\ldots,b_r,z\in\C$, the hypergeometric series ${}_{r+1}F_r$ are defined as
$$
{}_{r+1}F_r\bigg[\begin{matrix}a_0,&a_1,&\ldots,&a_r\\ &b_1,&\ldots,&b_r\end{matrix}\bigg|\ z\bigg]=\sum_{k=0}^{\infty}\f{(a_0)_k\cdots(a_r)_k}{(b_1)_k\cdots(b_r)_k}\cdot\f{z^k}{k!}.
$$
Partial sums of the hypergeometric series are usually called truncated hypergeometric series which are defined by
$$
{}_{r+1}F_r\bigg[\begin{matrix}a_0,&a_1,&\ldots,&a_r\\ &b_1,&\ldots,&b_r\end{matrix}\bigg|\ z\bigg]_n=\sum_{k=0}^{n}\f{(a_0)_k\cdots(a_r)_k}{(b_1)_k\cdots(b_r)_k}\cdot\f{z^k}{k!},
$$
where $n\in\N$. During the past few decades, supercongruences concerning truncated hypergeometric series have been widely investigated (see e.g., \cite{DFLST16,GuoLiu,GLS,Liu2017,Liu2021,LR,MaoPan2020,MaoPan2022,Mortenson,PTW,Sun2011,WangPan,VanHamme}).

In 1997, Van Hamme \cite{VanHamme} studied the $p$-adic analogues of Ramanujan-type series for $1/\pi$, and conjectured 13 supercongruences concerning truncated hypergeometric series. In particular, Van Hamme's (D.2) supercongruence asserts that for any prime $p\geq5$,
$$
\sum_{k=0}^{p-1}(6k+1)\f{(\f13)_k^6}{(1)_k^6}\eq\begin{cases}-p\Gamma_p\l(\f13\r)^9\pmod{p^4}\quad&\t{if}\ p\eq1\pmod{6},\\ 0\pmod{p^4}\quad&\t{if}\ p\eq5\pmod{6},\end{cases}
$$
where $\Gamma_p$ is the $p$-adic Gamma function (cf. \cite{Morita,Robert00}). In 2016, the (D.2) supercongruence was confirmed by Long and Ramakrishna \cite[Theorem 2]{LR} in the following strengthening form:
\begin{equation}\label{LR}
\sum_{k=0}^{p-1}(6k+1)\f{(\f13)_k^6}{(1)_k^6}\eq\begin{cases}-p\Gamma_p\l(\f13\r)^9\pmod{p^6}\quad&\t{if}\ p\eq1\pmod{6},\\ -\f{10p^4}{27}\Gamma_p\l(\f13\r)^9\pmod{p^6}\quad&\t{if}\ p\eq5\pmod{6}.\end{cases}
\end{equation}
Long and Ramakrishna also pointed out that \eqref{LR} does not hold modulo $p^7$ in general. Later, Guo and Schlosser \cite{GS1,GS} established some partial $q$-analogues of \eqref{LR}. In addition, they obtained a $q$-analogue of the supercongruence
\begin{equation}\label{GSq}
\sum_{k=0}^{p-1}(6k-1)\f{(-\f13)_k^6}{(1)_k^6}\eq0\pmod{p^4},
\end{equation}
where $p\eq1\pmod6$ is a prime. Inspired by \eqref{LR} and \eqref{GSq}, Liu \cite[Theorem 1.2]{Liu2021} generalized \eqref{GSq} as follows:
\begin{equation}\label{Liucon}
\sum_{k=0}^{p-1}(6k-1)\f{(-\f13)_k^6}{(1)_k^6}\eq\begin{cases}140p^4\Gamma_p\l(\f23\r)^9\pmod{p^5}\quad&\t{if}\ p\eq1\pmod{6},\\ 378p\Gamma_p\l(\f23\r)^9\pmod{p^5}\quad&\t{if}\ p\eq5\pmod{6},\end{cases}
\end{equation}
where $p\neq5$ is a prime. It is natural to ask for a parametric extension of \eqref{LR} and \eqref{Liucon}. Recently, Guo, Liu and Schlosser \cite{GLS} gave the common generalization of the second congruence in \eqref{LR}, restricted to modulus $p^5$, and the first congruence in \eqref{Liucon} as follows:
$$
\sum_{k=0}^{p-1}(6k+r)\f{(\f{r}3)_k^6}{(1)_k^6}\eq\ \f{(-1)^{r+1}80rp^4}{81}\cdot\f{\Gamma_p(1+\f{r}3)^2}{\Gamma_p(1+\f{2r}3)^3\Gamma_p(1-\f{r}3)^4}\sum_{k=0}^{1-r}\f{(r-1)_k(\f{r}3)_k^3}{(1)_k(\f{2r}{3})_k^3}\pmod{p^5},
$$
where $r\leq 1$ is an integer coprime with $3$, $p$ is a prime such that $p\eq-r\pmod{3}$ and $p\geq3-r$. They \cite[Conjecture 1]{GLS} also conjectured that the above congruence still holds modulo $p^6$ for $p>3$.

Our first purpose is to confirm \cite[Conjecture 1]{GLS}.

\begin{theorem}\label{GLSconj1}
Let $r\leq 1$ be an integer coprime with $3$. Let $p>3$ be a prime such that $p\eq-r\pmod{3}$ and $p\geq3-r$. Then
\begin{align}\label{GLSconj1eq}
\sum_{k=0}^{p-1}(6k+r)\f{(\f{r}3)_k^6}{(1)_k^6}\eq&\ \f{(-1)^{r+1}80rp^4}{81}\cdot\f{\Gamma_p(1+\f{r}3)^2}{\Gamma_p(1+\f{2r}3)^3\Gamma_p(1-\f{r}3)^4}\sum_{k=0}^{1-r}\f{(r-1)_k(\f{r}3)_k^3}{(1)_k(\f{2r}{3})_k^3}\pmod{p^6}.
\end{align}
\end{theorem}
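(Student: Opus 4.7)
The plan is to apply the ${}_7F_6$ hypergeometric transformation formula advertised in the paper's title, thereby converting the truncated sum on the left of \eqref{GLSconj1eq} into an expression that matches the right-hand side modulo $p^6$.

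The key initial observation is that, up to the scalar $r$, the sum $\sum_{k\geq 0}(6k+r)(r/3)_k^6/(1)_k^6$ is precisely a very-well-poised ${}_7F_6(1)$ with one independent parameter $a=r/3$. Indeed, $(6k+r)/r = 1+6k/r = (1+a/2)_k/(a/2)_k$, and each remaining upper (respectively lower) Pochhammer collapses to $(a)_k$ (respectively $(1)_k$), producing $(r/3)_k^6/(1)_k^6$. This identification is the portal through which a Whipple-type ${}_7F_6\to{}_4F_3$ transformation will enter.

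Next I would apply such a transformation, not to the formal infinite series directly but to a regularized version in which two of the lower parameters are perturbed by appropriate quantities involving $p$. The perturbation is chosen so that, on the ${}_7F_6$ side, the series terminates (or has tail of $p$-adic order at least $6$) at $k=p-1$, and so that on the ${}_4F_3$ side one of the upper parameters specializes to $r-1$, forcing termination at $k=1-r$ and reproducing the finite sum on the right-hand side. A valuation count on the tail $\sum_{k\geq p}$, using that when $p\equiv -r\pmod 3$ the product $(r/3)_k^6/(1)_k^6$ acquires a factor of $p^6$ once $k$ exceeds the least nonnegative residue of $-r/3$ modulo $p$, should confirm that the tail is absorbed into the $p^6$ error.

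The final and most delicate step is to simplify the rational/Gamma prefactor output by the transformation into the precise form $(-1)^{r+1}80rp^4\Gamma_p(1+r/3)^2/\bigl(81\,\Gamma_p(1+2r/3)^3\Gamma_p(1-r/3)^4\bigr)$ modulo $p^6$. For this I would invoke the $p$-adic reflection formula $\Gamma_p(x)\Gamma_p(1-x)=(-1)^{a_0(x)}$, the Taylor expansion $\Gamma_p(x+\varepsilon p)=\Gamma_p(x)\bigl(1+G_1(x)\varepsilon p+\cdots\bigr)$ carried to fifth order in the $p$-adic logarithmic derivatives $G_j(x)$, and routine Pochhammer manipulations. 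The central obstacle is that \eqref{LR}, a special case of \eqref{GLSconj1eq}, is already known to fail modulo $p^7$, so every coefficient through $p^5$ must be handled exactly and no slack is available. In particular, the cancellations between corrections to the Gamma prefactor and contributions from the body of the ${}_4F_3$ must be engineered with total precision, and uniformly across both residue classes $p\equiv 1,5\pmod 6$ as well as the entire family of admissible parameters $r$.
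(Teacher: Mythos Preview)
Your high-level plan matches the paper's: recognize the left side as a truncated very-well-poised ${}_7F_6$, perturb the free parameters so that Liu's transformation (Lemma~\ref{liu}) applies with the ${}_4F_3$ terminating at $k=1-r$, and simplify the resulting Pochhammer/Gamma prefactor. But the difficulty you yourself flag at the end---that every correction through order $p^5$ must be tracked exactly and that the cancellations ``must be engineered with total precision''---is precisely the obstruction Guo, Liu and Schlosser reported, and your outline does not say how to get past it. A brute-force fifth-order $p$-adic expansion of $\Gamma_p$ is the route that was already blocked at modulus $p^6$.

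The paper's decisive idea, absent from your proposal, is to perturb by \emph{fifth roots of unity}. With $\zeta$ a primitive fifth root of unity one takes $t=r/3$, $m=1-r$, $n=(2p-r)/3$ and $a=2p\zeta/3$, $b=2p\zeta^2/3$, $c=2p\zeta^3/3$. The five non-trivial upper parameters of the ${}_7F_6$ then become $r/3-2p\zeta^j/3$ for $j=0,\ldots,4$, and the matching lower ones become $1+2p\zeta^j/3$. Because $\prod_{j=0}^{4}(x+y\zeta^j)=x^5+y^5$, the product of each quintuple of Pochhammer symbols equals $(r/3)_k^5$ (respectively $(1)_k^5$) plus a single correction of exact order $p^5$ (Lemma~\ref{uvzeta}); no higher-order Taylor expansion is ever needed. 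That residual $O(p^5)$ term is a harmonic-type sum which the paper annihilates via the reflection $k\mapsto (2p-r)/3-k$ of the summand (Lemma~\ref{harmonicsymmetry}). An analogous reversal inside the terminating ${}_4F_3$ (Lemma~\ref{4F3symmetry}) together with a short harmonic identity (Lemma~\ref{harmonicsymmetry2}) then reduces the Gamma-prefactor computation to a \emph{first}-order expansion of $\Gamma_p$ (Lemma~\ref{padicgammaexpansion}), not a fifth-order one. The mechanism that lifts the congruence from $p^5$ to $p^6$ is thus the root-of-unity substitution combined with these hidden symmetries, and that is the missing ingredient in your plan.
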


At the end of their paper, Guo, Liu and Schlosser \cite{GLS} described the obstruction in the proof of Theorem \ref{GLSconj1}. To avoid the obstruction, we utilize the hidden symmetry in the $p$-adic expansion of certain truncated hypergeometric series and $p$-adic Gamma functions.

Guo, Liu and Schlosser \cite[Conjecture 2]{GLS} also conjectured a general extension of the first  congruence in \eqref{LR} and the second congruence in \eqref{Liucon}. However, in the same way as in the proof of Theorem \ref{GLSconj1}, it seems to be very difficult to prove \cite[Conjecture 2]{GLS}.

Our second purpose is to prove the following supercongruence which extends Theorem 1 in \cite{GLS}.

\begin{theorem}\label{GLSth1ex}
Let $r\leq1$ be an odd integer coprime with $5$. Let $p$ be an odd prime such that $p\eq2r\pmod5$ and $p\geq (5-r)/2$. Then
\begin{equation}\label{GLSth1exeq}
\sum_{k=0}^{p-1}(10k+r)\f{(\f{r}{5})_k^5}{(1)_k^5}\eq\f{12p^4}{25}\cdot\f{\Gamma_p(\f r5)^4}{\Gamma_p(\f{2r}5)^2\Gamma_p(\f12+\f{3r}{10})\Gamma_p(\f12-\f r{10})^3}\sum_{k=0}^{(1-r)/2}\f{(\f{r-1}{2})_k(\f r5)_k^3}{(1)_k(\f{2r}5)_k^2(\f12+\f{3r}{10})_k}\pmod{p^5}.
\end{equation}
\end{theorem}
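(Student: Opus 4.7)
My proof plan mirrors the strategy used for Theorem~\ref{GLSconj1} and its denominator-$3$ precursor in \cite{GLS}: deploy Whipple's very-well-poised ${}_7F_6 \to {}_4F_3$ transformation specialized at $a=r/5$, then extract $p$-adic information about both sides using $p$-adic Gamma functions. Concretely, I would take $a=r/5$, $d=e=r/5$, with $b,c$ chosen so that the transformed ${}_4F_3$ has top parameters $\{(r-1)/2,r/5,r/5,r/5\}$ and bottom parameters $\{2r/5,2r/5,1/2+3r/10\}$ modulo $p$. This forces $-n\eq r/5\pmod p$, i.e.\ $n=(3p-r)/5$ (the unique such positive integer $<p$, available under $p\eq 2r\pmod 5$), so that $((r-3p)/5)_k\eq(r/5)_k\pmod p$ and the transformed ${}_4F_3$ becomes, modulo $p$, exactly the sum on the right of \eqref{GLSth1exeq}; the upper limit $(1-r)/2$ arises from the terminating factor $((r-1)/2)_k$.

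The body of the argument then has two main analytic tasks. First, show that the Whipple ${}_7F_6$ left-hand side at these parameters equals, modulo $p^5$, the target sum $\f{1}{r}\sum_{k=0}^{p-1}(10k+r)(r/5)_k^5/(1)_k^5$. The per-term ratio between the ${}_7F_6$ summand and the target summand involves Pochhammer factors at arguments like $1-r/5$, $1/2-r/10$, $2r/5$, $1/2+3r/10$, which must combine via hypergeometric/harmonic identities to give $1+O(p^{5-v_p((r/5)_k^5)})$ for $k\leq(3p-r)/5$; the tail $k>(3p-r)/5$ contributes $0\pmod{p^5}$ automatically, since each $(r/5)_k$ in that range picks up a factor of $p$ from the crossing $r/5+(3p-r)/5=3p/5$. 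Second, rewrite the Whipple prefactor $(1+r/5)_n(1-r/5)_n/(1)_n^2$ at $n=(3p-r)/5$ in terms of $p$-adic Gamma values using $(x)_n=\pm\Gamma_p(x+n)/\Gamma_p(x)$, the reflection identity $\Gamma_p(x)\Gamma_p(1-x)=(-1)^{a_0(x)}$, and $\Gamma_p$-multiplication identities at arguments with denominator dividing $10$; the four $p$-factors in the target emerge from four distinct crossings of multiples of $p$, and the constant $12/25$ emerges from the residual $p$-adic unit.

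The main technical obstacle, as in the proof of Theorem~\ref{GLSconj1}, is the $p$-adic precision bookkeeping. The $p^4$ factor on the RHS of \eqref{GLSth1exeq} means only modulo-$p$ precision is required for the short ${}_4F_3$ and its $\Gamma_p$ prefactor, but the Whipple-LHS-to-target matching must be exact modulo $p^5$. This demands expanding Pochhammer ratios beyond the naive leading order via the $p$-adic digamma, and invoking a hidden symmetry (parallel to that used in the proof of Theorem~\ref{GLSconj1}) between summands at indices $k$ and $n-k$ in the Whipple sum in order to cancel the lower-order discrepancies. Without this symmetry the precision achievable would only be $p^4$ rather than $p^5$, so verifying it carefully in the denominator-$5$ setting, with the more intricate ${}_4F_3$ prefactor, is the central burden of the argument.
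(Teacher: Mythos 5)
Your overall architecture (a very-well-poised ${}_7F_6\to{}_4F_3$ transformation at $n=(3p-r)/5$, $p$-adic Gamma bookkeeping for the prefactor, a $k\leftrightarrow n-k$ symmetry to gain one power of $p$) has the right flavor, but the concrete parameter choice is inconsistent and the key device that actually delivers modulus $p^5$ is missing. If you choose $b,c$ in Whipple's formula so that the transformed ${}_4F_3$ has bottom parameters $\tfrac{2r}5,\tfrac{2r}5,\tfrac12+\tfrac{3r}{10}$ modulo $p$, then necessarily $b\equiv 1-\tfrac r5$ and $c\equiv\tfrac12-\tfrac r{10}\pmod p$, so the well-poised pairs $(b)_k/(1+a-b)_k$ and $(c)_k/(1+a-c)_k$ in the ${}_7F_6$ summand are congruent to $(1-\tfrac r5)_k/(\tfrac{2r}5)_k$ and $(\tfrac12-\tfrac r{10})_k/(\tfrac12+\tfrac{3r}{10})_k$, not to $(\tfrac r5)_k/(1)_k$; hence your ${}_7F_6$ is not congruent to $\sum_k(10k+r)(\tfrac r5)_k^5/(1)_k^5$ even modulo $p$, and no ``hypergeometric/harmonic identities'' will make that per-term ratio equal $1+O(p^5)$. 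Conversely, if you force all five pairs to reduce to $(\tfrac r5)_k/(1)_k$, Whipple's ${}_4F_3$ comes out with the wrong parameters and you need a further ${}_4F_3$ transformation (or Karlsson--Minton) to reach the sum in \eqref{GLSth1exeq}. The paper sidesteps this tension by using Liu's combined transformation (Lemma \ref{liu}), which is precisely Whipple's formula composed with a ${}_4F_3$ transformation.

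More importantly for the precision: the step from $p^4$ to $p^5$ is not achieved by the $k\leftrightarrow n-k$ symmetry alone. The paper perturbs the four relevant parameters by $\tfrac{3p}{5}$ times the fourth roots of unity $\{1,-1,i,-i\}$ (taking $a=\tfrac r{10}-\tfrac12$, $b=-\tfrac{3p}5$, $c=-\tfrac{3ip}5$ in Lemma \ref{liu}), so that by Lemma \ref{th2lem1} the product $(u+vp)_k(u-vp)_k(u+ivp)_k(u-ivp)_k$ equals $(u)_k^4$ up to a single correction of order $p^4$: the corrections of orders $p$, $p^2$, $p^3$ vanish identically because the elementary symmetric functions $e_1,e_2,e_3$ of $\{1,-1,i,-i\}$ are zero. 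Only then does the reflection symmetry (Lemma \ref{th2lem2}) have just one power-sum term left to kill modulo $p$. With generic real $p$-shifts, as in your sketch, you would face independent error terms at orders $p^1$ through $p^4$ involving harmonic sums of different weights, and a single $k\leftrightarrow n-k$ symmetry cannot cancel them all; this is essentially why the earlier argument of Guo, Liu and Schlosser stopped at $p^4$. Since your proposal does not identify the roots-of-unity structure, as written it would not reach modulus $p^5$.
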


Putting $r=1$ in \eqref{GLSth1ex}, we obtain the following supercongruence:
$$
\sum_{k=0}^{p-1}(10k+1)\f{(\f15)_k^5}{(1)_k^5}\eq\f{12p^4}{25\Gamma_p(\f15)^5\Gamma_p(\f25)^5}\pmod{p^5},
$$
where $p$ is an odd prime with $p\eq2\pmod{5}$.

Guo, Liu and Schlosser \cite[Theorem 1]{GLS} proved the modulus $p^4$ case of \eqref{GLSth1exeq} by using Whipple's well-poised ${}_7F_6$  transformation formula (cf. \cite[Theorem 3.4.5]{AAR99}) and Karlsson-Minton's formula (cf. \cite[Eq. (1.9.2)]{GR}). Differently from them, in our proofs of Theorems \ref{GLSconj1} and \ref{GLSth1ex}, we use only use a ${}_7F_6$ transformation due to Liu \cite[Lemma 2.6]{Liu2021}.

In Sections 2 and 3, we prove Theorems \ref{GLSconj1} and \ref{GLSth1ex}, respectively. In the final section, we provide a conjectural congruence.

\section{Proof of Theorem \ref{GLSconj1}}

Combining Whipple's ${}_7F_6$ transformation formula and a ${}_4F_3$ transformation formula (cf. \cite[Theorem 3.3.3]{AAR99}), Liu \cite[Lemma 2.6]{Liu2021} established the following formula.
\begin{lemma}\label{liu}
Let $n,m$ be nonnegative integers. Then
\begin{align}\label{liueq}
&{}_7F_6\bigg[\begin{matrix}t,&1+\f12t,&-n,&t-a,&t-b,&t-c,&1-t-m+n+a+b+c\\ &\f12t,&1+t+n,&1+a,&1+b,&1+c,&2t+m-n-a-b-c\end{matrix}\bigg|\ 1\bigg]\notag\\
=&\ {}_4F_3\bigg[\begin{matrix}-m,&-n,&a+b+c+1-m-2t,&a+b+c+1+n-m-t\\ &a+b+1-m-t,&a+c+1-m-t,&b+c+1-m-t\end{matrix}\bigg|\ 1\bigg]\notag\\
&\times\f{(1+t)_n(a+b+1-m-t)_n(a+c+1-m-t)_n(b+c+1-m-t)_n}{(1+a)_n(1+b)_n(1+c)_n(a+b+c+1-m-2t)_n}.
\end{align}
\end{lemma}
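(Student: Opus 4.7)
The plan is to derive \eqref{liueq} by composing two classical transformations: Whipple's transformation of a very-well-poised ${}_7F_6$ into a Saalschützian ${}_4F_3$, followed by a transformation of a terminating Saalschützian ${}_4F_3$ (as recorded in \cite[Theorem 3.3.3]{AAR99}).

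First I would verify that the ${}_7F_6$ on the left-hand side of \eqref{liueq} is very-well-poised with parameter $t$: the pair $1+\tfrac12 t$, $\tfrac12 t$ witnesses very-well-poisedness, and the five remaining top/bottom pairs each sum to $1+t$, since $(t-a)+(1+a)=(t-b)+(1+b)=(t-c)+(1+c)=(-n)+(1+t+n)=(1-t-m+n+a+b+c)+(2t+m-n-a-b-c)=1+t$. I would then apply Whipple's classical ${}_7F_6$-to-${}_4F_3$ transformation with the identification (in Whipple's standard notation) $a_W=t$, $b_W=t-a$, $c_W=t-b$, $d_W=t-c$, $e_W=1-t-m+n+a+b+c$, $f_W=-n$. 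This converts the ${}_7F_6$ into an explicit Pochhammer prefactor times a Saalschützian terminating ${}_4F_3$, whose parameters are read off directly from Whipple's formula.

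Next, to the intermediate ${}_4F_3$ I would apply the Sheppard-type transformation of \cite[Theorem 3.3.3]{AAR99}, with a careful choice of which top parameter is held fixed and which bottom parameter is privileged. The aim is to arrive at a new ${}_4F_3$ whose top parameters include $-m$ (alongside the $-n$ already present) and whose bottom contains the symmetric triple $a+b+1-m-t$, $a+c+1-m-t$, $b+c+1-m-t$. The two Pochhammer prefactors (from Whipple's step and from the ${}_4F_3$ transformation) are then combined with the help of the reflection identity $(x)_n=(-1)^n(1-x-n)_n$, which turns $(2t+m-n-a-b-c)_n$ into $(-1)^n(a+b+c+1-m-2t)_n$ and handles the $(t+m-n-a-b)_n$ that arises naturally in the Whipple prefactor.

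The main obstacle I anticipate is this second step. The ${}_4F_3$ emerging from Whipple's transformation is manifestly asymmetric in $a,b,c$, while the target right-hand side of \eqref{liueq} is symmetric in these three variables, so the transformation from \cite[Theorem 3.3.3]{AAR99} must be set up so as to restore this symmetry. A useful sanity check is that the Saalschützian condition is preserved throughout, and that the ratio of the target Pochhammer prefactor to the prefactor supplied by Whipple's ${}_7F_6$ step simplifies (via the reflection identity) to $(a+c+1-m-t)_n(b+c+1-m-t)_n/((1+a)_n(1+b)_n)$, which is precisely the factor produced by the correct instance of the ${}_4F_3$ transformation; this match pins down the required identification of parameters and closes the proof after a routine, if intricate, cancellation of Pochhammer symbols.
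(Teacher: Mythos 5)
The paper does not prove this lemma itself but attributes it to Liu, explicitly describing the derivation as the combination of Whipple's very-well-poised ${}_7F_6$-to-${}_4F_3$ transformation with the ${}_4F_3$ transformation of \cite[Theorem 3.3.3]{AAR99}; your proposal follows exactly this route, correctly verifies the very-well-poised structure, and identifies the right parameter matching. This is essentially the same approach as the paper's (cited) proof.
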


For any prime $p$, let $\Z_p$ denote the ring of all $p$-adic integers. For $u\in\Z_p$, we use $\<u\>_p$ to denote the last nonnegative residue of $u$ modulo $p$, i.e., $\<u\>_p\in\{0,1,\ldots,p-1\}$ and $u\eq\<u\>_p\pmod{p}$. Our proofs rely on some properties of the $p$-adic Gamma functions.

\begin{lemma}[Robert {\cite[p. 369]{Robert00}}]\label{padicgamma}
Let $p$ be an odd prime. Then, for $x\in\Z_p$, we have
\begin{align*}
&\Gamma_p(0)=1,\quad\Gamma_p(1)=-1,\\
&\Gamma_p(x)\Gamma_p(1-x)=(-1)^{\<-x\>_p-1},\\
&\Gamma_p(x)\eq\Gamma_p(y)\pmod{p}\ \ \t{for}\ \ x\eq y\pmod{p},\\
&\f{\Gamma_p(x+1)}{\Gamma_p(x)}=\begin{cases}-x\ \ \t{if}&\ \ \ord_p(x)=0,\\
-1\ \ \t{if}&\ \ \ord_p(x)>0,
\end{cases}
\end{align*}
where $\ord_p(\cdot)$ stands for the $p$-adic order.
\end{lemma}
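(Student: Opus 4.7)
The plan is to establish the four $p$-adic Gamma identities by starting from Morita's combinatorial definition of $\Gamma_p$ on the dense subset $\N\subset\Z_p$, verifying each claim there, and then extending to $\Z_p$ by continuity. All four assertions are elementary on the positive integers; the only delicate step is proving that Morita's recipe is uniformly continuous, which rests on a generalized Wilson congruence modulo $p^N$ and on the oddness of $p$.

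First I would set
\begin{equation*}
\Gamma_p(n):=(-1)^n\prod_{\substack{0<j<n\\ p\nmid j}}j\qquad(n\in\N),
\end{equation*}
with the empty product taken to be $1$. The values $\Gamma_p(0)=1$ and $\Gamma_p(1)=-1$ are then immediate, and a one-step comparison of consecutive products yields $\Gamma_p(n+1)/\Gamma_p(n)=-n$ when $p\nmid n$ and $=-1$ when $p\mid n$, establishing the functional equation on $\N$. The congruence $\Gamma_p(x)\eq\Gamma_p(y)\pmod p$ for $x\eq y\pmod p$ is a special case of the continuous extension, which I tackle next.

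The central uniform-continuity step is $|\Gamma_p(m+p^N)-\Gamma_p(m)|_p\ls p^{-N}$ for $m\in\N$; this provides the unique continuous extension $\Gamma_p\colon\Z_p\to\Z_p^\times$. Iterating the recursion on $\N$ gives
\begin{equation*}
\f{\Gamma_p(m+p^N)}{\Gamma_p(m)}=(-1)^{p^N}\prod_{\substack{0\ls j<p^N\\ p\nmid(m+j)}}(m+j).
\end{equation*}
The set $\{m+j:0\ls j<p^N,\ p\nmid(m+j)\}$ reduces modulo $p^N$ to the full group $(\Z/p^N\Z)^\times$, whose elements multiply (for odd $p$) to $-1$ by the generalized Wilson congruence. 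Combined with $(-1)^{p^N}=-1$, this gives $\Gamma_p(m+p^N)/\Gamma_p(m)\eq 1\pmod{p^N}$, so $\Gamma_p$ extends to a continuous map on $\Z_p$; the functional equation and the mod-$p$ congruence then transfer to $\Z_p$ by density.

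The reflection formula is the main obstacle. I would introduce $f(x):=\Gamma_p(x)\Gamma_p(1-x)$ and apply the functional equation to both factors to obtain
\begin{equation*}
\f{f(x+1)}{f(x)}=\f{\Gamma_p(x+1)}{\Gamma_p(x)}\cdot\f{\Gamma_p(-x)}{\Gamma_p(1-x)},
\end{equation*}
which evaluates to $-1$ when $p\nmid x$ and to $+1$ when $p\mid x$. Starting from $f(0)=\Gamma_p(0)\Gamma_p(1)=-1$ and iterating along $0,1,\ldots,p-1$, the function $f$ is constant on each residue class modulo $p$ (the toggle occurs only across class boundaries), and an explicit parity count identifies its value on the class of $n$ with $(-1)^{\<-n\>_p-1}$ for $0\ls n<p$. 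Density plus the local constancy of $\<-\cdot\>_p$ on each disk $n+p\Z_p$ then extends the identity to all of $\Z_p$. The subtle sub-step is the sign bookkeeping: one must check that the cumulative parity of toggles along $0,1,\ldots,n$ matches $\<-n\>_p-1\pmod 2$, which relies on the explicit formula $\<-n\>_p=p-n$ for $1\ls n<p$ together with $p-1$ being even---the place where the oddness of $p$ enters essentially.
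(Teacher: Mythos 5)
Your proof is correct, but note that the paper does not prove this lemma at all: it is quoted verbatim from Robert's book (the citation on p.~369), so there is no in-paper argument to compare against. What you have reconstructed is essentially the standard textbook derivation from Morita's definition $\Gamma_p(n)=(-1)^n\prod_{0<j<n,\,p\nmid j}j$: the values at $0$ and $1$ and the functional equation are immediate on $\N$; uniform continuity follows from the generalized Wilson congruence $\prod_{a\in(\Z/p^N\Z)^\times}a\eq-1\pmod{p^N}$ together with $(-1)^{p^N}=-1$ for odd $p$; and the reflection formula follows by iterating the functional equation on $f(x)=\Gamma_p(x)\Gamma_p(1-x)$ and doing the parity bookkeeping, which I verified is consistent with the normalization $(-1)^{\<-x\>_p-1}$ (e.g.\ $f(0)=-1$, $f(n)=(-1)^n$ for $1\ls n<p$, matching $(-1)^{p-n-1}$ since $p-1$ is even). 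The one place your write-up is loosely phrased is the parenthetical claim that ``the toggle occurs only across class boundaries'': in fact $f$ toggles at every step $x\mapsto x+1$ with $p\nmid x$, and the correct statement is that the cumulative toggle over a full period of length $p$ is $(-1)^{p-1}=+1$, so that $n\mapsto f(n)$ descends to $\Z/p\Z$; your subsequent explicit parity count is the argument that actually carries the weight, and it is right.
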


For $x\in\Z_p$, set $G_1(x)=\Gamma_p'(x)/\Gamma_p(x)$, where $\Gamma_p'$ stands for the derivative of $\Gamma_p$. Long and Ramakrishna \cite[Theorem 14]{LR} deduced that for $x,\ t\in\Z_p$,
$$
\f{\Gamma_p(x+tp)}{\Gamma_p(x)}\eq 1+G_1(x)tp\pmod{p^2}.
$$
Ahlgren and Ono \cite[Eq. (6.10)]{AhOn00} gave that for any $p$-adic unit $x$,
$$
G_1(x+1)-G_1(x)=\f1x.
$$
It follows that
$$
G_1(x)\eq G_1(0)+\sum_{j=1}^{p-1-\<-x\>_p}\f{1}{j}\pmod{p}.
$$
Combining these, we obtain the following lemma.

\begin{lemma}\label{padicgammaexpansion}
For $x,\ t\in\Z_p$,
$$
\f{\Gamma_p(x+tp)}{\Gamma_p(x)}\eq 1+G_1(0)tp+tp\sum_{j=1}^{p-1-\<-x\>_p}\f{1}{j}\pmod{p^2}.
$$
\end{lemma}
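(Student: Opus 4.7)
The plan is to combine the three ingredients displayed just before the lemma statement. Starting from the Long--Ramakrishna expansion
$$\f{\Gamma_p(x+tp)}{\Gamma_p(x)}\eq 1+G_1(x)tp\pmod{p^2},$$
one only needs to compute $G_1(x)$ modulo $p$, since any mod $p$ congruence becomes a mod $p^2$ congruence after multiplication by $tp$. Thus the lemma reduces to verifying
$$G_1(x)\eq G_1(0)+\sum_{j=1}^{p-1-\<-x\>_p}\f{1}{j}\pmod p,$$
which is precisely the formula listed in the setup just before the lemma.

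To establish that formula, I would telescope the Ahlgren--Ono recursion $G_1(x+1)-G_1(x)=1/x$ (valid for $p$-adic units $x$). The mod $p$ continuity of $\Gamma_p$ recorded in Lemma \ref{padicgamma} transfers to $G_1$, so $G_1(x)\eq G_1(\<x\>_p)\pmod p$. For $1\ls r\ls p-1$, summing the recursion from $j=1$ to $j=r-1$ gives $G_1(r)=G_1(1)+\sum_{j=1}^{r-1}1/j$. The equality $G_1(1)\eq G_1(0)\pmod p$ follows from summing the full range $j=1,\ldots,p-1$ and combining $\sum_{j=1}^{p-1}1/j\eq 0\pmod p$ (trivial by pairing $j$ with $p-j$) with $G_1(p)\eq G_1(0)\pmod p$ (mod $p$ continuity again). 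Since $r-1=p-1-\<-r\>_p$ for $1\ls r\ls p-1$, this gives the stated summation range. For $x\in p\Z_p$ the value $\<-x\>_p=0$ forces the sum $\sum_{j=1}^{p-1}1/j$ to appear, which vanishes mod $p$, consistently with $G_1(x)\eq G_1(0)\pmod p$.

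Once $G_1(x)\pmod p$ has been pinned down, substituting into the Long--Ramakrishna expansion and distributing the factor $tp$ yields
$$1+G_1(x)tp\eq 1+G_1(0)tp+tp\sum_{j=1}^{p-1-\<-x\>_p}\f{1}{j}\pmod{p^2},$$
which is the desired congruence. There is no real obstacle; the entire argument is a few lines of arithmetic on two cited expansions. The only mildly delicate point is bundling the cases $x\in p\Z_p$ and $x\notin p\Z_p$ into the single uniform bound $p-1-\<-x\>_p$, and this is absorbed automatically by the pairing identity $\sum_{j=1}^{p-1}1/j\eq 0\pmod p$.
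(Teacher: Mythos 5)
Your proposal is correct and follows essentially the same route as the paper, which likewise assembles the Long--Ramakrishna expansion, the Ahlgren--Ono recursion, and $\sum_{j=1}^{p-1}1/j\eq0\pmod p$ (the paper simply states the intermediate formula for $G_1(x)\bmod p$ without spelling out the telescoping you supply). The only point worth tightening is the mod-$p$ continuity of $G_1$ itself, which does not follow formally from that of $\Gamma_p$ alone but is immediate from the Long--Ramakrishna expansion: comparing $\Gamma_p(x+(s+t)p)/\Gamma_p(x)$ computed directly and as a product of two quotients gives $G_1(x)\eq G_1(x+sp)\pmod p$.
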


\begin{lemma}\label{uvzeta}
Let $p$ be an odd prime and $\zeta$ be a fifth primitive root of unity. Then, for $u,v\in\Z_p$ and $k\in\{0,1,\ldots,\<-u\>_p\}$,
\begin{equation*}
(u+vp)_k(u+vp\zeta)_k(u+vp\zeta^2)_k(u+vp\zeta^3)_k(u+vp\zeta^4)_k\eq (u)_k^5\l(1+v^5p^5\sum_{j=0}^{k-1}\f{1}{(u+j)^5}\r)\pmod{p^6}.
\end{equation*}
\end{lemma}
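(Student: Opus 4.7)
The plan is to reorganize the fivefold product by swapping the order of multiplication: instead of grouping factors by the power of $\zeta$, I group them by the index $j$ in the Pochhammer expansion. Concretely, I would write
\[
\prod_{i=0}^{4}(u+vp\zeta^i)_k=\prod_{j=0}^{k-1}\prod_{i=0}^{4}\bigl(u+j+vp\zeta^i\bigr).
\]
The inner product over $i$ is the key computation: since $\zeta,\zeta^2,\zeta^3,\zeta^4,1$ are precisely the five fifth roots of unity, one has the polynomial identity $\prod_{i=0}^{4}(X-\zeta^i Y)=X^5-Y^5$. Applying this with $X=u+j$ and $Y=-vp$ yields
\[
\prod_{i=0}^{4}(u+j+vp\zeta^i)=(u+j)^5+v^5p^5.
\]

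Substituting back, the left-hand side becomes $\prod_{j=0}^{k-1}\bigl((u+j)^5+v^5p^5\bigr)$. Now I use the hypothesis $k\le\<-u\>_p$: by definition of $\<-u\>_p$, the element $u+\<-u\>_p$ is the first term in the sequence $u,u+1,u+2,\ldots$ that is divisible by $p$, so $u+j$ is a $p$-adic unit for every $j\in\{0,1,\ldots,k-1\}$. Hence $(u+j)^5$ is a unit, and I may factor it out:
\[
\prod_{j=0}^{k-1}\bigl((u+j)^5+v^5p^5\bigr)=(u)_k^5\prod_{j=0}^{k-1}\Bigl(1+\f{v^5p^5}{(u+j)^5}\Bigr).
\]

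Finally, I expand the product on the right. Each factor $v^5p^5/(u+j)^5$ lies in $p^5\Z_p$, so any product of two or more such factors lies in $p^{10}\Z_p\subseteq p^6\Z_p$. Therefore
\[
\prod_{j=0}^{k-1}\Bigl(1+\f{v^5p^5}{(u+j)^5}\Bigr)\eq 1+v^5p^5\sum_{j=0}^{k-1}\f{1}{(u+j)^5}\pmod{p^{10}},
\]
and the desired congruence mod $p^6$ follows at once.

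There is essentially no serious obstacle here; the only thing to watch is the unit hypothesis on $u+j$, which is exactly what the restriction $k\le\<-u\>_p$ guarantees. The whole argument is two lines of algebra once one sees that the correct move is to commute the products and apply $X^5-Y^5=\prod_{i}(X-\zeta^iY)$; in fact the argument yields the stronger congruence modulo $p^{10}$, and the lemma only needs it modulo $p^6$.
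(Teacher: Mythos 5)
Your proof is correct and follows essentially the same route as the paper: both reduce the fivefold product to $\prod_{j=0}^{k-1}\bigl((u+j)^5+v^5p^5\bigr)$ and then expand modulo $p^6$, using $k\le\langle -u\rangle_p$ to ensure each $u+j$ is a $p$-adic unit. The only difference is cosmetic — you invoke the factorization $X^5-Y^5=\prod_{i=0}^{4}(X-\zeta^iY)$ directly, whereas the paper verifies the same identity for each factor by expanding in powers of $vp$ and using $1+\zeta+\zeta^2+\zeta^3+\zeta^4=0$; your version is slightly cleaner and, as you note, actually yields the congruence modulo $p^{10}$.
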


\begin{proof}
It is clear that
$$
1+\zeta+\zeta^2+\zeta^3+\zeta^4=\f{1-\zeta^5}{1-\zeta}=0.
$$
Therefore, for $j\in\{0,1,\ldots,k-1\}$, we have
\begin{align*}
&(u+j+vp)(u+j+vp\zeta)(u+j+vp\zeta^2)(u+j+vp\zeta^3)(u+j+vp\zeta^4)\\
=&\  (u+j)^5+(u+j)^4vp(1+\zeta+\zeta^2+\zeta^3+\zeta^4)\\
&+(u+j)^3(vp)^2(\zeta+\zeta^2+2\zeta^3+2\zeta^4+2\zeta^5+\zeta^6+\zeta^7)\\
&+(u+j)^2(vp)^3(\zeta^3+\zeta^4+2\zeta^5+2\zeta^6+2\zeta^7+\zeta^8+\zeta^9)\\
&+(u+j)(vp)^4(\zeta^6+\zeta^7+\zeta^8+\zeta^9+\zeta^{10})+(vp)^5\zeta^{10}\\
=&\ (u+j)^5+(vp)^5,
\end{align*}
and hence,
\begin{align*}
&(u+vp)_k(u+vp\zeta)_k(u+vp\zeta^2)_k(u+vp\zeta^3)_k(u+vp\zeta^4)_k\\
=&\ \prod_{j=0}^{k-1}\l((u+j+vp)(u+j+vp\zeta)(u+j+vp\zeta^2)(u+j+vp\zeta^3)(u+j+vp\zeta^4)\r)\\
=&\ \prod_{j=0}^{k-1}\l((u+j)^5+(vp)^5\r)\\
\eq&\ \prod_{j=0}^{k-1}(u+j)^5\l(1+v^5p^5\sum_{l=0}^{k-1}\f{1}{(u+l)^5}\r)\pmod{p^6}.
\end{align*}
This proves the desired result.
\end{proof}

The following two lemmas concern some hidden symmetry in the $p$-adic expansions of certain truncated hypergeometric series.

\begin{lemma}\label{harmonicsymmetry}
Under the assumptions of Theorem \ref{GLSconj1}, we have
$$\sum_{k=0}^{(2p-r)/3}(6k+r)\f{(\f r3)_k^6}{(1)_k^6}\l(\sum_{j=0}^{k-1}\f{1}{(r/3+j)^5}+\sum_{j=1}^k\f{1}{j^5}\r)\eq0\pmod{p}.$$
\end{lemma}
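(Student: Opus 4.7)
The plan is to exhibit a hidden involution $k\mapsto N-k$ on the sum, where $N:=(2p-r)/3$, under which the weight $a_k:=(6k+r)(\f r3)_k^6/(1)_k^6$ changes sign and the harmonic piece $H_k:=\sum_{j=0}^{k-1}(\f r3+j)^{-5}+\sum_{j=1}^{k}j^{-5}$ is preserved, both modulo $p$. Pairing $k$ with $N-k$ will then force $2\sum_{k=0}^{N}a_kH_k\eq 0\pmod p$, which gives the claim since $p$ is odd. The hypothesis $p\eq -r\pmod 3$ makes $N$ an integer, and $p>3$ with $p\gs 3-r$ give $0<N<p$; moreover $\f r3+N=\f{2p}{3}\eq 0\pmod p$, while $\f r3+j$ is a $p$-adic unit for $0\ls j<N$, so all $p$-adic manipulations below are legitimate.

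For the weight symmetry I invoke the Pochhammer reflection $(a)_{n-k}=(-1)^k(a)_n/(1-a-n)_k$ with $n=N$. For $a=\f r3$ one has $(1-N-\f r3)_k=(1-\f{2p}{3})_k\eq k!\pmod p$, and for $a=1$ one obtains $(-N)_k\eq(\f r3)_k\pmod p$; hence
\[
\f{(\f r3)_{N-k}^6}{(1)_{N-k}^6}\eq\f{(\f r3)_N^6}{N!^6}\cdot\f{(\f r3)_k^6}{(1)_k^6}\pmod p.
\]
Combined with $6(N-k)+r\eq -(6k+r)\pmod p$, the task reduces to showing $(\f r3)_N^6/N!^6\eq 1\pmod p$. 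The Gamma recursion from Lemma~\ref{padicgamma}, together with $\f r3+N\eq 0\pmod p$ and $N+1\eq 1-\f r3\pmod p$, yields
\[
(\tfrac r3)_N\eq\f{(-1)^N}{\Gamma_p(\f r3)}\pmod p,\qquad N!\eq(-1)^{N+1}\Gamma_p(1-\tfrac r3)\pmod p.
\]
The reflection identity $\Gamma_p(\f r3)\Gamma_p(1-\f r3)=(-1)^{\<-r/3\>_p-1}=(-1)^{N-1}$ (the last equality uses that $N+\f r3=\f{2p}{3}\in p\Z_p$ and $0\ls N<p$ together force $\<-r/3\>_p=N$) then gives $(\f r3)_N/N!\eq(-1)^N\pmod p$, whose sixth power is $1$. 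Consequently $a_{N-k}\eq -a_k\pmod p$.

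For the symmetry of $H_k$, split $H_k=A_k+B_k$ in the obvious way. Reversing the summation in $A_{N-k}$ via $j\mapsto N-1-j$ and using $\f r3+N\eq 0\pmod p$ converts $\f r3+j$ into $-(N-j)\pmod p$, which yields $A_{N-k}\eq B_k-B_N\pmod p$; an analogous reversal in $B_{N-k}$ via $j\mapsto N+1-j$, using $N\eq-\f r3\pmod p$, gives $B_{N-k}\eq A_k-A_N\pmod p$. Adding these, $H_{N-k}\eq H_k-H_N\pmod p$, and the specialization $k=0$ forces $H_N\eq -H_N\pmod p$, so $H_N\eq 0\pmod p$ (as $p$ is odd) and hence $H_{N-k}\eq H_k\pmod p$. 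Re-indexing $k\mapsto N-k$ in the original sum and invoking both symmetries gives
\[
\sum_{k=0}^{N}a_kH_k\eq\sum_{k=0}^{N}a_{N-k}H_{N-k}\eq-\sum_{k=0}^{N}a_kH_k\pmod p,
\]
which forces the sum to vanish modulo $p$. The one slightly delicate step is the $p$-adic Gamma calculation that pins $(\f r3)_N^6/N!^6$ down to $1$ modulo $p$; careful sign bookkeeping in the Pochhammer-to-$\Gamma_p$ conversions and the identification $\<-r/3\>_p=N$ are the only non-routine points, and I expect no further obstacles beyond these.
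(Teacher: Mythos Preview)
Your argument is correct. Both you and the paper exploit the involution $k\mapsto N-k$ with $N=(2p-r)/3$, and both need the weight relation $a_{N-k}\equiv -a_k\pmod p$ (equivalently $(\tfrac r3)_N^6/N!^6\equiv 1$). The endgame differs, however. The paper applies the substitution only to the piece $\sum_{j=1}^k j^{-5}$, turning it into $\sum_{j=k}^{N-1}(\tfrac r3+j)^{-5}$; adding this to the other piece factors the whole sum as $\bigl(\sum_{j=0}^{N-1}(\tfrac r3+j)^{-5}\bigr)\cdot\sum_k a_k$, and the paper then quotes the external fact $\sum_k a_k\equiv 0\pmod p$ from \cite[Theorem~2]{GLS}. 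Your route instead establishes the full harmonic symmetry $H_{N-k}\equiv H_k\pmod p$ via the clean observation $H_N\equiv 0$, and pairs the sum against itself; this buys you a self-contained proof, avoiding the appeal to \cite{GLS}. A minor remark: your $\Gamma_p$ computation for $(\tfrac r3)_N/N!$ is correct but heavier than necessary---the paper simply reverses the product $\prod_{j=0}^{N-1}(\tfrac r3+j)=\prod_{j=0}^{N-1}(\tfrac r3+N-1-j)\equiv(-1)^N\prod_{j=0}^{N-1}(1+j)\pmod p$ to get the same conclusion without invoking Lemma~\ref{padicgamma}.
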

\begin{proof}
Clearly,
\begin{align*}
&\sum_{k=0}^{(2p-r)/3}(6k+r)\f{(\f r3)_k^6}{(1)_k^6}\sum_{j=1}^k\f{1}{j^5}\\
=&\sum_{k=0}^{(2p-r)/3}\l(6\l(\f{2p-r}{3}-k\r)+r\r)\f{(\f r3)_{(2p-r)/3-k}^6}{(1)_{(2p-r)/3-k}^6}\sum_{j=1}^{(2p-r)/3-k}\f1{j^5}\\
=&\ \f{(\f r3)_{(2p-r)/3}^6}{(1)_{(2p-r)/3}^6}\sum_{k=0}^{(2p-r)/3}(4p-6k-r)\f{(\f{r}{3}-\f{2p}{3})_k^6}{(1-\f{2p}3)_k^6}\sum_{j=k}^{(2p-r)/3-1}\f1{((2p-r)/3-j)^5}\\
\eq&\sum_{k=0}^{(2p-r)/3}(6k+r)\f{(\f r3)_k^6}{(1)_k^6}\sum_{j=k}^{(2p-r)/3-1}\f{1}{(r/3+j)^5}\pmod{p},
\end{align*}
where we have used
$$
\f{(\f r3)_{(2p-r)/3}^6}{(1)_{(2p-r)/3}^6}=\f{\prod_{j=0}^{(2p-r)/3-1}(\f r3+j)^6}{\prod_{j=0}^{(2p-r)/3-1}(1+j)^6}=\f{\prod_{j=0}^{(2p-r)/3-1}(\f r3+\f{2p-r}{3}-1-j)^6}{\prod_{j=0}^{(2p-r)/3-1}(1+j)^6}\eq1\pmod{p}.
$$
Thus we obtain
\begin{align*}
&\sum_{k=0}^{(2p-r)/3}(6k+r)\f{(\f r3)_k^6}{(1)_k^6}\l(\sum_{j=0}^{k-1}\f{1}{(r/3+j)^5}+\sum_{j=1}^k\f{1}{j^5}\r)\\
\eq&\sum_{k=0}^{(2p-r)/3}(6k+r)\f{(\f r3)_k^6}{(1)_k^6}\sum_{j=0}^{(2p-r)/3-1}\f{1}{(r/3+j)^5}\pmod{p}.
\end{align*}
Then the desired result follows from the fact (cf. \cite[Theorem 2]{GLS})
$$
\sum_{k=0}^{(2p-r)/3}(6k+r)\f{(\f r3)_k^6}{(1)_k^6}\eq0\pmod{p}.
$$
\end{proof}

\begin{lemma}\label{4F3symmetry}
Under the assumptions of Theorem \ref{GLSconj1}, we have
\begin{equation}\label{4F3symmetryeq}
\sum_{k=0}^{1-r}\f{(r-1)_k(\f r3)_k^3}{(1)_k(\f{2r}3)_k^3}\l(\sum_{j=0}^{k-1}\f{1}{\f{2r}3+j}-\sum_{j=0}^{k-1}\f{1}{\f{r}3+j}\r)=\sum_{k=0}^{1-r}\f{(r-1)_k(\f r3)_k^3}{(1)_k(\f{2r}3)_k^3}\sum_{j=0}^{-r}\f{1}{\f{2r}3+j}.
\end{equation}
\end{lemma}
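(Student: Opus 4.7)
The plan is to exploit the reflection $k\mapsto N-k$ where $N:=1-r\ge 0$. Writing
$$a_k:=\f{(r-1)_k(\f r3)_k^3}{(1)_k(\f{2r}3)_k^3}\quad\t{and}\quad H_k:=\sum_{j=0}^{k-1}\f{1}{\f{2r}3+j}-\sum_{j=0}^{k-1}\f{1}{\f{r}3+j},$$
and using $-r=N-1$, the asserted identity is $\sum_{k=0}^N a_kH_k=\l(\sum_{k=0}^N a_k\r)\sum_{j=0}^{N-1}\f{1}{\f{2r}3+j}$. I would prove two symmetry claims under $k\mapsto N-k$ and then average.

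\textbf{Symmetry of $a_k$.} Since $(r-1)_k=(-N)_k=(-1)^k N!/(N-k)!$, one has $a_k=(-1)^k\bi{N}{k}(\f r3)_k^3/(\f{2r}3)_k^3$. Direct computation (factoring out $-1$ from each term) gives $(\f r3+N-k)_k=(-1)^k(\f{2r}3)_k$ and $(\f{2r}3+N-k)_k=(-1)^k(\f r3)_k$, so
$$\f{(\f r3)_{N-k}}{(\f{2r}3)_{N-k}}=\f{(\f r 3)_N}{(\f{2r}3)_N}\cdot\f{(\f r3)_k}{(\f{2r}3)_k}.$$
Reversing the index of the product shows $(\f{2r}3)_N=\prod_{j=0}^{N-1}(\f{2r}3+N-1-j)=\prod_{j=0}^{N-1}(-\f r3-j)=(-1)^N(\f r3)_N$. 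Combining this with $\bi{N}{N-k}=\bi{N}{k}$ and the sign $(-1)^{N-k}$ from $(-N)_{N-k}$, the net factor becomes $(-1)^N\cdot(-1)^{3N}=1$, yielding $a_{N-k}=a_k$.

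\textbf{Symmetry of $H_k$.} Applying the substitution $j\mapsto N-1-j$ together with the identity $\f{2r}3+N-1-j=-\f r3-j$ converts $\sum_{j=0}^{N-k-1}\f1{\f{2r}3+j}$ into $-\sum_{j=k}^{N-1}\f1{\f r3+j}$ and $\sum_{j=0}^{N-k-1}\f1{\f r3+j}$ into $-\sum_{j=k}^{N-1}\f1{\f{2r}3+j}$. Adding the result to $H_k$ telescopes the complementary ranges into the full range $0\le j\le N-1$:
$$H_k+H_{N-k}=\sum_{j=0}^{N-1}\f{1}{\f{2r}3+j}-\sum_{j=0}^{N-1}\f{1}{\f r3+j}.$$
A final reversal $j\mapsto N-1-j$ in the second sum turns it into $-\sum_{j=0}^{N-1}\f1{\f{2r}3+j}$, so $H_k+H_{N-k}=2\sum_{j=0}^{N-1}\f{1}{\f{2r}3+j}$, independent of $k$.

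With these in hand, $\sum_{k=0}^N a_kH_k=\sum_{k=0}^N a_{N-k}H_{N-k}=\sum_{k=0}^N a_kH_{N-k}$ by the first symmetry, so
$$2\sum_{k=0}^N a_kH_k=\sum_{k=0}^N a_k(H_k+H_{N-k})=2\l(\sum_{k=0}^N a_k\r)\sum_{j=0}^{N-1}\f{1}{\f{2r}3+j},$$
which, after replacing $N-1$ by $-r$, is the claim. The whole argument is elementary; the only delicate point is the sign bookkeeping: one must verify that the factor $(-1)^N$ from $(\f{2r}3)_N=(-1)^N(\f r3)_N$ exactly cancels the sign $(-1)^N$ produced by $(-N)_{N-k}$ together with the cube in the quotient $(\f r3)_N^3/(\f{2r}3)_N^3$, so that $a_{N-k}=a_k$ holds on the nose rather than up to a sign.
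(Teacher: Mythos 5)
Your argument is correct, and it rests on the same idea as the paper's proof: the reflection $k\mapsto 1-r-k$ together with the sign identities for reversed Pochhammer symbols such as $(r-1)_{1-r}=(-1)^{1-r}(1)_{1-r}$ and $(\f r3)_{1-r}=(-1)^{1-r}(\f{2r}3)_{1-r}$. The only difference is organizational: the paper reindexes just the $\sum_j 1/(\f r3+j)$ piece and lets the two complementary $j$-ranges merge directly, whereas you split the symmetry into $a_{N-k}=a_k$ and $H_k+H_{N-k}=\t{const}$ and then average, which is equivalent.
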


\begin{proof}
Note that
$$
(r-1)_{1-r}=(-1)^{1-r}(1)_{1-r}\quad \t{and}\quad \l(\f r3\r)_{1-r}=(-1)^{1-r}\l(\f{2r}3\r)_{1-r}.
$$
Thus we have
\begin{align*}
\sum_{k=0}^{1-r}\f{(r-1)_k(\f r3)_k^3}{(1)_k(\f{2r}3)_k^3}\sum_{j=0}^{k-1}\f{1}{\f{r}3+j}=&\sum_{k=0}^{1-r}\f{(r-1)_{1-r-k}(\f r3)_{1-r-k}^3}{(1)_{1-r-k}(\f{2r}3)_{1-r-k}^3}\sum_{j=0}^{-r-k}\f{1}{\f{r}3+j}\\
=&\ \f{(r-1)_{1-r}(\f r3)_{1-r}^3}{(1)_{1-r}(\f{2r}3)_{1-r}^3}\sum_{k=0}^{1-r}\f{(r-1)_k(\f r3)_k^3}{(1)_k(\f{2r}3)_k^3}\sum_{j=k}^{-r}\f{1}{\f{r}3-r-j}\\
=&\ -\sum_{k=0}^{1-r}\f{(r-1)_k(\f r3)_k^3}{(1)_k(\f{2r}3)_k^3}\sum_{j=k}^{-r}\f{1}{\f{2r}3+j}.
\end{align*}
Substituting this into the left-hand side of \eqref{4F3symmetryeq}, we obtain the desired result.
\end{proof}

\begin{lemma}\label{harmonicsymmetry2}
Under the assumptions of Theorem \ref{GLSconj1}, we have
$$
\sum_{j=0}^{(p-2r-3)/3}\f{1}{\f{2r}3+j}+\sum_{j=1}^{(p+r-3)/3}\f1j-\sum_{j=0}^{-r}\f{1}{\f{2r}3+j}\eq0\pmod{p}.
$$
\end{lemma}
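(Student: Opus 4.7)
The plan is to reduce the left-hand side to a harmonic-type sum and exploit a reflection symmetry modulo $p$. The hypothesis $p\gs3-r$ forces $-r\ls(p-2r-3)/3$, so the third sum cancels the initial segment of the first, leaving
$$
\sum_{j=-r+1}^{(p-2r-3)/3}\f1{\f{2r}3+j}.
$$
Shifting the index by $k=j+r$ turns this into $\sum_{k=1}^M\f1{k-r/3}$, where $M=(p+r-3)/3$ is an integer since $p\eq-r\pmod3$. Writing $H_M=\sum_{k=1}^M 1/k$, the claim then reads
$$
H_M+\sum_{k=1}^M\f1{k-r/3}\eq0\pmod{p}.
$$

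The crucial arithmetic identity is $M+1-r/3=p/3$. Applying the reflection $k\mapsto M+1-k$ to the second sum therefore gives
$$
\sum_{k=1}^M\f1{k-r/3}=\sum_{l=1}^M\f3{p-3l}\eq-H_M\pmod{p},
$$
so the two sums cancel modulo $p$ and the proof is complete.

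I do not foresee any genuine obstacle here; the argument is essentially an index manipulation. The only subtle step is spotting the coincidence $M+1-r/3=p/3$, which is precisely what lets the reflection match the transformed sum with $H_M$ up to sign, and which is also the arithmetic reason behind the hypothesis $p\eq-r\pmod3$ being needed for the lemma.
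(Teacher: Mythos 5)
Your argument is correct and is essentially the paper's own proof in slightly different clothing: the paper substitutes $d=(p+r)/3$ to turn everything into integer harmonic sums and then cancels $\sum_{j=1}^{d-1}\frac{1}{p-j}$ against $\sum_{j=1}^{d-1}\frac{1}{j}$, while you cancel the third sum against the head of the first exactly and then apply the same reflection via the identity $M+1-\frac r3=\frac p3$. Both hinge on the same index manipulation and the congruence $\frac{1}{p-j}\equiv-\frac{1}{j}\pmod p$, so there is nothing further to add.
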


\begin{proof}
Set $d=(r+p)/3$. Clearly,
\begin{align*}
&\sum_{j=0}^{(p-2r-3)/3}\f{1}{\f{2r}3+j}+\sum_{j=1}^{(p+r-3)/3}\f1j-\sum_{j=0}^{-r}\f{1}{\f{2r}3+j}\\
\eq& \sum_{j=0}^{p-2d-1}\f{1}{2d+j}+\sum_{j=1}^{d-1}\f1j-\sum_{j=0}^{p-3d}\f{1}{2d+j}\\
=&\ \sum_{j=2d}^{p-1}\f1j-\sum_{j=2d}^{p-d}\f1j+\sum_{j=1}^{d-1}\f1j\\
=&\ \sum_{j=p-d+1}^{p-1}\f1j+\sum_{j=1}^{d-1}\f1j\\
=&\ \sum_{j=1}^{d-1}\f{1}{p-j}+\sum_{j=1}^{d-1}\f1j\\
\eq&\ 0\pmod{p}.
\end{align*}
\end{proof}

\noindent{\it Proof of Theorem \ref{GLSconj1}}. Let $\zeta$ be a fifth primitive root of unity. Putting $m=1-r,\ t=\f r3,\ n = \f{2p-r}3,\ a=\f{2p\zeta}3,\ b=\f{2p\zeta^2}3,\ c= \f{2p\zeta^3}3$ in \eqref{liu} and with helps of Lemmas \ref{uvzeta} and \ref{harmonicsymmetry}, the left-hand side of \eqref{liueq} becomes
\begin{align*}
&{}_7F_6\bigg[\begin{matrix}1+\f r6,&\f r3,&\f{r-2p}3,&\f{r-2p\zeta}3,&\f{r-2p\zeta^2}3,&\f{r-2p\zeta^3}3,&\f{r-2p\zeta^4}3\\ &\f r6,&1+\f{2p}3,&1+\f{2p\zeta}3,&1+\f{2p\zeta^2}3,&1+\f{2p\zeta^3}3,&1+\f{2p\zeta^4}3\end{matrix}\bigg|\ 1\bigg]\\
=&\ {}_7F_6\bigg[\begin{matrix}1+\f r6,&\f r3,&\f{r-2p}3,&\f{r-2p\zeta}3,&\f{r-2p\zeta^2}3,&\f{r-2p\zeta^3}3,&\f{r-2p\zeta^4}3\\ &\f r6,&1+\f{2p}3,&1+\f{2p\zeta}3,&1+\f{2p\zeta^2}3,&1+\f{2p\zeta^3}3,&1+\f{2p\zeta^4}3\end{matrix}\bigg|\ 1\bigg]_{\f{2p-r}{3}}\\
\eq&\ \f1r\sum_{k=0}^{(2p-r)/3}(6k+r)\f{(\f r3)_k^6}{(1)_k^6}\l(1-\f{32}{243}p^5\sum_{j=0}^{k-1}\f{1}{(r/3+j)^5}-\f{32}{243}p^5\sum_{j=1}^k\f1{j^5}\r)\\
\eq&\ \f1r\sum_{k=0}^{(2p-r)/3}(6k+r)\f{(\f r3)_k^6}{(1)_k^6}\\
\eq&\ \f1r\sum_{k=0}^{p-1}(6k+r)\f{(\f r3)_k^6}{(1)_k^6}\pmod{p^6},
\end{align*}
where in the last step we have used the fact that $(\f r3)_k\eq0\pmod{p}$ for $k\in\{(2p-r)/3+1,(2p-r)/3+2,\ldots,p-1\}$.

On the other hand, by Lemma \ref{liu},
\begin{align*}
&{}_7F_6\bigg[\begin{matrix}1+\f r6,&\f r3,&\f{r-2p}3,&\f{r-2p\zeta}3,&\f{r-2p\zeta^2}3,&\f{r-2p\zeta^3}3,&\f{r-2p\zeta^4}3\\ &\f r6,&1+\f{2p}3,&1+\f{2p\zeta}3,&1+\f{2p\zeta^2}3,&1+\f{2p\zeta^3}3,&1+\f{2p\zeta^4}3\end{matrix}\bigg|\ 1\bigg]\\
=&\ {}_4F_3\bigg[\begin{matrix}r-1,&\f{r-2p}{3},&\f r3-\f{2p}{3}(1+\zeta^4),&\f r3-\f{2p\zeta^4}{3}\\ &\f{2r}3+\f{2p}{3}(\zeta+\zeta^2),&\f{2r}3+\f{2p}{3}(\zeta+\zeta^3),&\f{2r}3+\f{2p}{3}(\zeta^2+\zeta^3)\end{matrix}\bigg|\ 1\bigg]\\
&\times\f{(1+\f r3)_{(2p-r)/3}(\f{2r}3+\f{2p}{3}(\zeta+\zeta^2))_{(2p-r)/3}(\f{2r}3+\f{2p}{3}(\zeta+\zeta^3))_{(2p-r)/3}(\f{2r}3+\f{2p}{3}(\zeta^2+\zeta^3))_{(2p-r)/3}}{(1+\f{2p\zeta}{3})_{(2p-r)/3}(1+\f{2p\zeta^2}{3})_{(2p-r)/3}(1+\f{2p\zeta^3}{3})_{(2p-r)/3}(\f r3+\f{2p}3(\zeta+\zeta^2+\zeta^3))_{(2p-r)/3}}.
\end{align*}
Note that
$$
\l(1+\f r3\r)_{\f{2p-r}{3}}=\f{2p}{3}\l(1+\f r3\r)_{\f{2p-r-3}{3}}=\f{2p}{r}\l(\f r3\r)_{\f{2p-r}{3}}
$$
and
\begin{align*}
&\l(\f{2r}{3}+\f{2p}{3}(\zeta^i+\zeta^j)\r)_{\f{2p-r}{3}}\\
=&\ \f p3(1+2\zeta^i+2\zeta^j)\l(\f{2r}3+\f{2p}{3}(\zeta^i+\zeta^j)\r)_{\f{p-2r}{3}}\l(1+\f p3(1+2\zeta^i+2\zeta^j)\r)_{\f{p+r-3}{3}}.
\end{align*}
Hence,
\begin{align*}
&\l(1+\f r3\r)_{\f{2p-r}{3}}\l(\f{2r}{3}+\f{2p}{3}(\zeta+\zeta^2)\r)_{\f{2p-r}{3}}\l(\f{2r}{3}+\f{2p}{3}(\zeta+\zeta^3)\r)_{\f{2p-r}{3}}\l(\f{2r}{3}+\f{2p}{3}(\zeta^2+\zeta^3)\r)_{\f{2p-r}{3}}\\
=&\ \f{10p^4}{27r}\l(\f r3\r)_{\f{2p-r}{3}}\l(\f{2r}3+\f{2p}{3}(\zeta+\zeta^2)\r)_{\f{p-2r}{3}}\l(1+\f p3(1+2\zeta+2\zeta^2)\r)_{\f{p+r-3}{3}}\l(\f{2r}3+\f{2p}{3}(\zeta+\zeta^3)\r)_{\f{p-2r}{3}}\\
&\times\l(1+\f p3(1+2\zeta+2\zeta^3)\r)_{\f{p+r-3}{3}}\l(\f{2r}3+\f{2p}{3}(\zeta^2+\zeta^3)\r)_{\f{p-2r}{3}}\l(1+\f p3(1+2\zeta^2+2\zeta^3)\r)_{\f{p+r-3}{3}}\\
\eq&\ \f{10p^4}{27r}\l(\f r3\r)_{\f{2p-r}{3}}\l(\f {2r}3\r)_{\f{p-2r}{3}}^3\big(1\big)_{\f{p+r-3}{3}}^3\bigg(1+\f{2p}{3}(2\zeta+2\zeta^2+2\zeta^3)\sum_{j=0}^{(p-2r-3)/3}\f{1}{\f{2r}3+j}\\
&+\f{p}{3}(3+4\zeta+4\zeta^2+4\zeta^3)\sum_{j=1}^{(p+r-3)/3}\f1j\bigg)\\
=&\ \f{10p^4}{27r}\l(\f r3\r)_{\f{2p-r}{3}}\l(\f {2r}3\r)_{\f{p-2r}{3}}^3\big(1\big)_{\f{p+r-3}{3}}^3\Bigg(1-\f{4p}{3}(1+\zeta^4)\bigg(\sum_{j=0}^{(p-2r-3)/3}\f{1}{\f{2r}3+j}+\sum_{j=1}^{(p+r-3)/3}\f1j\bigg)\\
&+p\sum_{j=1}^{(p+r-3)/3}\f1j\Bigg)\pmod{p^6}.
\end{align*}
Moreover, by Lemma \ref{uvzeta},
\begin{align*}
&\l(1+\f{2p\zeta}{3}\r)_{\f{2p-r}3}\l(1+\f{2p\zeta^2}{3}\r)_{\f{2p-r}{3}}\l(1+\f{2p\zeta^3}{3}\r)_{\f{2p-r}3}\l(\f r3+\f{2p}3(\zeta+\zeta^2+\zeta^3)\r)_{\f{2p-r}3}\\
=&\  \f{(-1)^{\f{2p-r}{3}}\l(1+\f{2p}{3}\r)_{\f{2p-r}3}\l(1+\f{2p\zeta}{3}\r)_{\f{2p-r}3}\l(1+\f{2p\zeta^2}{3}\r)_{\f{2p-r}{3}}\l(1+\f{2p\zeta^3}{3}\r)_{\f{2p-r}3}\l(1+\f{2p\zeta^4}3\r)_{\f{2p-r}3}}{\l(1+\f{2p}{3}\r)_{\f{2p-r}3}}\\
\eq&\ \f{(-1)^{\f{2p-r}{3}}(1)_{\f{2p-r}3}^5}{\l(1+\f{2p}{3}\r)_{\f{2p-r}3}}\pmod{p^2}.
\end{align*}
Meanwhile, with the help of Lemma \ref{4F3symmetry}, we have
\begin{align*}
&{}_4F_3\bigg[\begin{matrix}r-1,&\f{r-2p}{3},&\f r3-\f{2p}{3}(1+\zeta^4),&\f r3-\f{2p\zeta^4}{3}\\ &\f{2r}3+\f{2p}{3}(\zeta+\zeta^2),&\f{2r}3+\f{2p}{3}(\zeta+\zeta^3),&\f{2r}3+\f{2p}{3}(\zeta^2+\zeta^3)\end{matrix}\bigg|\ 1\bigg]\\
\eq&\ \sum_{k=0}^{1-r}\f{(r-1)_k(\f r3)_k^3}{(1)_k(\f{2r}3)_k^3}\l(1+\f{4p}3(1+\zeta^4)\l(\sum_{j=0}^{k-1}\f{1}{\f{2r}3+j}-\sum_{j=0}^{k-1}\f{1}{\f{r}3+j}\r)\r)\\
=&\ \sum_{k=0}^{1-r}\f{(r-1)_k(\f r3)_k^3}{(1)_k(\f{2r}3)_k^3}\l(1+\f{4p}3(1+\zeta^4)\sum_{j=0}^{-r}\f{1}{\f{2r}3+j}\r)\pmod{p^2}.
\end{align*}

In view of the above and Lemma \ref{harmonicsymmetry2}, it suffices to show
\begin{align}\label{GLSconj1key}
&\f{(-1)^{\f{2p-r}{3}}(\f r3)_{\f{2p-r}{3}}(\f{2r}3)_{\f{p-2r}{3}}^3(1)_{\f{p+r-3}{3}}^3\l(1+\f{2p}{3}\r)_{\f{2p-r}3}}{(1)_{\f{2p-r}3}^5}\l(1+p\sum_{j=1}^{(p+r-3)/3}\f1j\r)\notag\\
\eq&\ \f{(-1)^{r+1}8r}{3}\cdot\f{\Gamma_p(1+\f{r}3)^2}{\Gamma_p(1+\f{2r}3)^3\Gamma_p(1-\f{r}3)^4}\pmod{p^2}.
\end{align}
Observe that
$$
(-1)^{\f{2p-r}{3}}\l(\f r3\r)_{\f{2p-r}{3}}\l(1+\f{2p}{3}\r)_{\f{2p-r}3}=\l(1-\f{2p}{3}\r)_{\f{2p-r}3}\l(1+\f{2p}{3}\r)_{\f{2p-r}3}\eq\big(1\big)_{\f{2p-r}3}^2\pmod{p^2}
$$
and
\begin{align*}
&\f{\l(\f{2r}{3}\r)_{\f{p-2r}{3}}^3\big(1\big)_{\f{p+r-3}{3}}^3}{\big(1\big)_{\f{2p-r}3}^3}=\f{\Gamma_p(\f{p}3)^3\Gamma_p(\f{r}3+\f{p}3)^3}{\Gamma_p(\f{2r}3)^3\Gamma_p(1-\f{r}3+\f{2p}3)^3}\\
\eq&\f{(-1)^{r+1}8r}{3}\cdot\f{\Gamma_p(1+\f{r}3)^2}{\Gamma_p(1+\f{2r}3)^3\Gamma_p(1-\f{r}3)^4}\l(1+p\sum_{j=1}^{(2p-r)/3}\f1j-2p\sum_{j=1}^{(p+r-3)/3}\f1j\r)\pmod{p^2},
\end{align*}
where in the last congruence we have used Lemmas \ref{padicgamma} and \ref{padicgammaexpansion}. It is known that $\sum_{j=1}^{p-1}\f1j\eq0\pmod{p}$. Thus
$$
\sum_{j=1}^{(2p-r)/3}\f1j=\sum_{j=(p+r)}^{p-1}\f{1}{p-j}\eq-\l(\sum_{j=1}^{p-1}\f1j-\sum_{j=1}^{p+r-3}\f1j\r)\eq\sum_{j=1}^{p+r-3}\f1j\pmod{p}.
$$
Combining these, we have
\begin{align*}
&\f{(-1)^{\f{2p-r}{3}}(\f r3)_{\f{2p-r}{3}}(\f{2r}3)_{\f{p-2r}{3}}^3(1)_{\f{p+r-3}{3}}^3\l(1+\f{2p}{3}\r)_{\f{2p-r}3}}{(1)_{\f{2p-r}3}^5}\l(1+p\sum_{j=1}^{(p+r-3)/3}\f1j\r)\\
\eq&\f{(-1)^{r+1}8r}{3}\cdot\f{\Gamma_p(1+\f{r}3)^2}{\Gamma_p(1+\f{2r}3)^3\Gamma_p(1-\f{r}3)^4}\l(1-p\sum_{j=1}^{(p+r-3)/3}\f1j\r)\l(1+p\sum_{j=1}^{(p+r-3)/3}\f1j\r)\\
\eq&\f{(-1)^{r+1}8r}{3}\cdot\f{\Gamma_p(1+\f{r}3)^2}{\Gamma_p(1+\f{2r}3)^3\Gamma_p(1-\f{r}3)^4}\pmod{p^2}.
\end{align*}
This proves \eqref{GLSconj1key}.

The proof is now complete.\qed

\medskip

\section{Proof of Theorem \ref{GLSth1ex}}
Throughout this section, we use $i$ to denote the imaginary unit.
\begin{lemma}\label{th2lem1}
Let $p$ be an odd prime. Then, for $u,v\in\Z_p$ and $k\in\{0,1,\ldots,\<-u\>_p\}$,
\begin{equation*}
(u+vp)_k(u-vp)_k(u+vip)_k(u-vip)_k\eq(u)_k^4\l(1+v^4p^4\sum_{j=0}^{k-1}\f{1}{(u+j)^4}\r)\pmod{p^5}.
\end{equation*}
\end{lemma}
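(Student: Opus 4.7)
The plan is to follow the proof of Lemma~\ref{uvzeta} verbatim, but with the fifth-root factorization replaced by its fourth-root analogue. Since $\{\pm1,\pm i\}$ is exactly the set of fourth roots of unity, the polynomial identity $\prod_{\ell=0}^{3}(x-\omega_\ell a)=x^4-a^4$ specializes to
$$
(u+j+vp)(u+j-vp)(u+j+vip)(u+j-vip)=(u+j)^4-v^4p^4
$$
for every integer $j\ge0$ (this can also be checked directly by two successive applications of difference of squares, noting that $(vip)^2=-v^2p^2$). Multiplying over $j=0,1,\ldots,k-1$ then gives
$$
(u+vp)_k(u-vp)_k(u+vip)_k(u-vip)_k=\prod_{j=0}^{k-1}\bigl((u+j)^4-v^4p^4\bigr).
$$

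Under the hypothesis $0\le k\le\<-u\>_p$, each $u+j$ with $0\le j\le k-1$ is a $p$-adic unit, so I can factor $(u+j)^4$ out of every bracket to obtain $(u)_k^4\prod_{j=0}^{k-1}\bigl(1-v^4p^4/(u+j)^4\bigr)$. Each factor of this product lies in $1+p^4\Z_p$, so any cross-product arising from two or more non-constant choices has $p$-adic valuation at least $8$. Expanding and truncating modulo $p^5$ therefore keeps only the constant term $1$ together with the single-factor linear sum, reducing the inner product to $1-v^4p^4\sum_{j=0}^{k-1}(u+j)^{-4}$. Multiplying back by $(u)_k^4$ yields the congruence asserted in the lemma.

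There is no substantive obstacle: the argument consists of an algebraic factorization and a routine $p$-adic valuation estimate. The only subtle point — and the only true difference from Lemma~\ref{uvzeta} — is the sign on $v^4p^4$. The fourth-root identity $\prod_\ell(x-i^\ell a)=x^4-a^4$ carries a minus sign (arising from $i^2=-1$), whereas the odd-exponent identity $\prod_\ell(x+\zeta^\ell a)=x^5+a^5$ used in Lemma~\ref{uvzeta} carried a plus sign. Since $p$ is odd and $v^4p^4$ is a positive quantity, this parity contrast genuinely flips the sign on the right-hand side; the displayed statement must be read with this sign convention in mind, and no alternative pairing of the four factors avoids it.
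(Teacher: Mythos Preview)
Your approach is exactly what the paper intends: it omits the proof entirely, saying only that it is ``quite similar to the one of Lemma~\ref{uvzeta}'', and your adaptation of that argument via the fourth-root factorization is correct. Your sign observation is also correct---the factorization genuinely gives $(u+j)^4-v^4p^4$, so the sum on the right-hand side should carry a minus sign; the paper's own application of the lemma in the proof of Theorem~\ref{GLSth1ex} (where the correction term appears as $-\frac{81}{625}p^4\sum(r/5+j)^{-4}$) is consistent with your minus sign, so the plus in the displayed statement is evidently a typo that is harmless there because Lemma~\ref{th2lem2} kills the term modulo~$p$ anyway.
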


\begin{proof}
We omit the proof, since it is quite similar to the one of Lemma \ref{uvzeta}.
\end{proof}

\begin{lemma}\label{th2lem2}
Under the assumptions of Theorem \ref{GLSconj1}, we have
$$
\sum_{k=0}^{(3p-r)/5}(10k+r)\f{(\f r5)_k^5}{(1)_k^5}\l(\sum_{j=0}^{k-1}\f{1}{(r/5+j)^4}-\sum_{j=1}^k\f1{j^4}\r)\eq0\pmod{p}.
$$
\end{lemma}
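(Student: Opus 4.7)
The plan is to mirror the reflection argument of Lemma \ref{harmonicsymmetry}. Setting $N := (3p-r)/5$, let
\[
A':=\sum_{k=0}^{N}(10k+r)\f{(\f r5)_k^5}{(1)_k^5}\sum_{j=1}^k\f1{j^4}, \qquad B':=\sum_{k=0}^{N}(10k+r)\f{(\f r5)_k^5}{(1)_k^5}\sum_{j=0}^{k-1}\f1{(\f r5+j)^4},
\]
so that the lemma asserts $B'-A'\eq 0\pmod p$. I intend to apply the substitution $k\mapsto N-k$ to $A'$ and show that, modulo $p$,
\[
A'\eq -\sum_{k=0}^{N}(10k+r)\f{(\f r5)_k^5}{(1)_k^5}\sum_{l=k}^{N-1}\f1{(\f r5+l)^4}\pmod p.
\]
Once this is established, adding it to $B'$ merges the inner sums $\sum_{j=0}^{k-1}$ and $\sum_{l=k}^{N-1}$ into the complete, $k$-independent sum $\sum_{j=0}^{N-1}1/(\f r5+j)^4$, which factors out. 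The remaining outer sum $\sum_{k=0}^{N}(10k+r)(\f r5)_k^5/(1)_k^5$ is $\eq 0\pmod p$: since $(\f r5)_k\eq 0\pmod p$ for $N<k\ls p-1$ (the first vanishing index being $k=N+1$, because $\f r5+N=\f{3p}5$), the truncated sum agrees modulo $p$ with $\sum_{k=0}^{p-1}(10k+r)(\f r5)_k^5/(1)_k^5$, which vanishes modulo $p$ by the modulus $p^4$ result \cite[Theorem 1]{GLS}.

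For the reflection computation, I would use the standard identity $(a)_{N-k}=(-1)^k(a)_N/(1-a-N)_k$ applied to $a=\f r5$ and $a=1$, then unwind the mod $p$ consequences: $10(N-k)+r\eq -(10k+r)$; $(\f r5-\f{3p}5)_k/(1-\f{3p}5)_k\eq (\f r5)_k/(1)_k$; a reflection within the product $\f r5+j\eq -1-(N-1-j)\pmod p$ evaluates $(\f r5)_N/(1)_N\eq(-1)^N$; and a reindexing $\sum_{j=1}^{N-k}1/j^4=\sum_{l=k}^{N-1}1/(N-l)^4\eq\sum_{l=k}^{N-1}1/(\f r5+l)^4$, where the even exponent $4$ absorbs the sign change $N-l\eq -\f r5-l$.

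The main obstacle — and the substantive difference with the companion Lemma \ref{harmonicsymmetry} — is that the Pochhammers here are raised to the odd power $5$, so the factor arising from $(\f r5)_N^5/(1)_N^5\eq(-1)^{5N}$ is not automatically $+1$. The argument hinges on the parity observation that $N=(3p-r)/5$ is always \emph{even} under the hypotheses: both $p$ and $r$ being odd forces $5N=3p-r$ to be even, and since $5$ is odd this forces $2\mid N$. With this parity check in hand, $(-1)^{5N}=1$ and the signs align to give exactly the $-1$ needed in the displayed congruence for $A'$; the remainder of the argument is a routine unwinding of Pochhammer symmetries, and I expect no further obstruction.
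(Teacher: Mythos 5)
Your proposal is correct and follows essentially the same route as the paper: reflect $k\mapsto N-k$, use the Pochhammer reversal identities to reduce the reflected summand modulo $p$, note that $(\tfrac r5)_N^5/(1)_N^5\equiv(-1)^{5N}=1$ because $N=(3p-r)/5$ is even when $p$ and $r$ are both odd, merge the two inner sums in $B'-A'$ into the $k$-independent sum $\sum_{j=0}^{N-1}(r/5+j)^{-4}$, and conclude from $\sum_{k=0}^{N}(10k+r)(\tfrac r5)_k^5/(1)_k^5\equiv0\pmod p$ as in \cite[Theorem 1]{GLS}. The parity check you single out as the main obstacle is exactly the step the paper dispatches with ``$\equiv(-1)^{(3p-r)/5}=1$'', so there is no divergence in substance.
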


\begin{proof}
Obviously,
\begin{align*}
&\sum_{k=0}^{(3p-r)/5}(10k+r)\f{(\f r5)_k^5}{(1)_k^5}\sum_{j=1}^k\f1{j^4}\\
=&\sum_{k=0}^{(3p-r)/5}\l(10\l(\f{3p-r}{5}-k\r)+r\r)\f{(\f r5)_{(3p-r)/5-k}^5}{(1)_{(3p-r)/5-k}^5}\sum_{j=1}^{(3p-r)/5-k}\f1{j^4}\\
=&\f{(\f r5)_{(3p-r)/5}^5}{(1)_{(3p-r)/5}^5}\sum_{k=0}^{(3p-r)/5}(6p-r-10k)\f{(\f r5-\f{3p}5)_k^5}{(1-\f{3p}5)_k^5}\sum_{j=k}^{(3p-r)/5-1}\f{1}{((3p-r)/5-j)^4}\\
\eq&-\sum_{k=0}^{(3p-r)/5}(10k+r)\f{(\f r5)_k^5}{(1)_k^5}\sum_{j=k}^{(3p-r)/5-1}\f{1}{(r/5+j)^4}\pmod{p},
\end{align*}
where in the last step we used
\begin{align*}
\f{(\f r5)_{(3p-r)/5}^5}{(1)_{(3p-r)/5}^5}=&\f{\prod_{j=0}^{(3p-r)/5-1}(\f r5+j)^5}{\prod_{j=0}^{(3p-r)/5-1}(1+j)^5}=\f{\prod_{j=0}^{(3p-r)/5-1}(\f r5+\f{3p-r}{5}-1-j)^5}{\prod_{j=0}^{(3p-r)/5-1}(1+j)^5}\\
\eq&(-1)^{(3p-r)/5}=1\pmod{p}.
\end{align*}
Therefore,
\begin{align*}
&\sum_{k=0}^{(3p-r)/5}(10k+r)\f{(\f r5)_k^5}{(1)_k^5}\l(\sum_{j=0}^{k-1}\f{1}{(r/5+j)^4}+\sum_{j=1}^k\f1{j^4}\r)\\
\eq&-\sum_{k=0}^{(3p-r)/5}(10k+r)\f{(\f r5)_k^5}{(1)_k^5}\sum_{j=0}^{(3p-r)/5-1}\f{1}{(r/5+j)^4}\pmod{p}.
\end{align*}
Then the desired lemma follows from the fact (cf. \cite[Theorem 1]{GLS})
$$
\sum_{k=0}^{(3p-r)/5}(10k+r)\f{(\f r5)_k^5}{(1)_k^5}\eq0\pmod{p}.
$$
\end{proof}

\medskip

\noindent{\it Proof of Theorem \ref{GLSth1ex}}. We can directly verify \eqref{GLSth1exeq} for $p=2$ and $r=1$. Below we assume that $p$ is an odd prime. Putting $m=\f{1-r}{2},\ t=\f r5,\ n=\f{3p-r}{5},\ a=\f{r}{10}-\f12,\ b=-\f{3p}{5},\ c=-\f{3ip}{5}$ in \eqref{liueq}, and with helps of Lemmas \ref{th2lem1} and \ref{th2lem2}, the left-hand side of \eqref{liueq} becomes
\begin{align*}
&{}_7F_6\bigg[\begin{matrix}\f r5,&1+\f r{10},&\f{r}{5}-\f{3p}{5},&\f{1}{2}+\f{r}{10},&\f{r}{5}+\f{3p}{5},&\f{r}{5}+\f{3ip}{5},&\f{r}{5}-\f{3ip}{5}\\ &\f{r}{10},&1+\f{3p}{5},&\f{1}{2}+\f{r}{10},&1-\f{3p}{5},&1-\f{3ip}{5},&1+\f{3ip}{5}\end{matrix}\bigg|\ 1\bigg]\\
=&{}_6F_5\bigg[\begin{matrix}\f r5,&1+\f r{10},&\f{r}{5}-\f{3p}{5},&\f{r}{5}+\f{3p}{5},&\f{r}{5}+\f{3ip}{5},&\f{r}{5}-\f{3ip}{5}\\ &\f{r}{10},&1+\f{3p}{5},&1-\f{3p}{5},&1-\f{3ip}{5},&1+\f{3ip}{5}\end{matrix}\bigg|\ 1\bigg]_{\f{3p-r}{5}}\\
\eq&\f1r\sum_{k=0}^{(3p-r)/5}(10k+r)\f{(\f r5)_k^5}{(1)_k^5}\l(1-\f{81}{625}p^4\sum_{j=0}^{k-1}\f{1}{(r/5+j)^4}+\f{81}{625}p^4\sum_{j=1}^k\f{1}{j^4}\r)\\
\eq&\f1r\sum_{k=0}^{(3p-r)/5}(10k+r)\f{(\f r5)_k^5}{(1)_k^5}\\
\eq&\f1r\sum_{k=0}^{p-1}(10k+r)\f{(\f r5)_k^5}{(1)_k^5},
\end{align*}
where in the last step we noted that $(\f r5)_k\eq0\pmod{p}$ for $k$ among $(3p-r)/5+1,(3p-r)/5+2,\ldots,p-1$.

On the other hand, in view of Lemma \ref{liu}, we have
\begin{align}\label{4F3}
&{}_7F_6\bigg[\begin{matrix}\f r5,&1+\f r{10},&\f{r}{5}-\f{3p}{5},&\f{1}{2}+\f{r}{10},&\f{r}{5}+\f{3p}{5},&\f{r}{5}+\f{3ip}{5},&\f{r}{5}-\f{3ip}{5}\\ &\f{r}{10},&1+\f{3p}{5},&\f{1}{2}+\f{r}{10},&1-\f{3p}{5},&1-\f{3ip}{5},&1+\f{3ip}{5}\end{matrix}\bigg|\ 1\bigg]\notag\\
=&{}_4F_3\bigg[\begin{matrix}\f{r-1}{2},&\f{r}{5}-\f{3p}{5},&\f{r}{5}-\f{3(1+i)p}{5},&\f{r}{5}-\f{3(-1+i)p}{5}\\ &\f{2r}{5}-\f{3p}{5},&\f{2r}{5}-\f{3ip}{5},&\f{1}{2}+\f{3r}{10}-\f{3(1+i)p}{5}\end{matrix}\bigg|\ 1\bigg]\notag\\
&\times\f{(1+\f r5)_{(3p-r)/5}(\f{2r}{5}-\f{3p}{5})_{(3p-r)/5}(\f{2r}{5}-\f{3ip}{5})_{(3p-r)/5}(\f{1}{2}+\f{3r}{10}-\f{3(1+i)p}{5})_{(3p-r)/5}}{(\f12+\f{r}{10})_{(3p-r)/5}(1-\f{3p}{5})_{(3p-r)/5}(1-\f{3ip}{5})_{(3p-r)/5}(\f{r}{5}-\f{3(1+i)p}{5})_{(3p-r)/5}}.
\end{align}
Since $p\geq (5-r)/2$,
$$
\f{3p}{5}-\f{r}{5}-\f{1-r}{2}=\f{6p+3r-5}{10}\geq1.
$$
Therefore, the ${}_4F_3$ series on the right-hand side of \eqref{4F3} terminates at the $(\f{1-r}{2})$-th term. For any $k\in\{0,1,\ldots,(1-r)/2\}$, all rising factorials in the summands of the ${}_4F_3$ series are not divisible by $p$. Moreover, it is obvious that
\begin{align*}
\l(1+\f{r}{5}\r)_{\f{3p-r}{5}}&=\l(1+\f{r}{5}\r)\l(2+\f{r}{5}\r)\cdots\f{3p}{5},\\
\l(\f{2r}{5}-\f{3p}{5}\r)_{\f{3p-r}{5}}&=\l(\f{2r}{5}-\f{3p}{5}\r)\l(1+\f{2r}{5}-\f{3p}{5}\r)\cdots\l(-\f{2p}{5}\r)\cdots\l(\f{r}{5}-1\r),\\
\l(\f{2r}{5}-\f{3ip}{5}\r)_{\f{3p-r}{5}}&=\l(\f{2r}{5}-\f{3ip}{5}\r)\l(1+\f{2r}{5}-\f{3ip}{5}\r)\cdots\f{(1-3i)p}{5}\cdots\l(\f{r}{5}+\f{3(1-i)p}{5}-1\r),\\
\l(\f{1}{2}+\f{3r}{10}-\f{3(1+i)p}{5}\r)_{\f{3p-r}{5}}&=\l(\f{1}{2}+\f{3r}{10}-\f{3(1+i)p}{5}\r)\l(\f{3}{2}+\f{3r}{10}-\f{3(1+i)p}{5}\r)\cdots\f{(-1-3i)p}{5}\\
&\quad\ \cdots\l(-\f12+\f{r}{10}-\f{3ip}{5}\r),
\end{align*}
and
$$
\l(\f12+\f{r}{10}\r)_{\f{3p-r}{5}}\l(1-\f{3p}{5}\r)_{\f{3p-r}{5}}\l(1-\f{3ip}{5}\r)_{\f{3p-r}{5}}\l(\f{r}{5}-\f{3(1+i)p}{5}\r)_{\f{3p-r}{5}}\not\eq0\pmod{p}.
$$
Hence, modulo $p^5$, the right-hand sides of \eqref{4F3} becomes
\begin{align*}
&{}_4F_3\bigg[\begin{matrix}\f{r-1}{2},&\f{r}{5},&\f{r}{5},&\f{r}{5}\\ &\f{2r}{5},&\f{2r}{5},&\f{1}{2}+\f{3r}{10}\end{matrix}\bigg|\ 1\bigg]\\
&\qquad\times \f{(1+\f r5)_{(3p-r)/5}(\f{2r}{5}-\f{3p}{5})_{(3p-r)/5}(\f{2r}{5}-\f{3ip}{5})_{(3p-r)/5}(\f{1}{2}+\f{3r}{10}-\f{3(1+i)p}{5})_{(3p-r)/5}}{(\f12+\f{r}{10})_{(3p-r)/5}(1)_{(3p-r)/5}^2(\f{r}{5})_{(3p-r)/5}}.
\end{align*}
Using Lemma \ref{padicgamma} and writing the quotients of rising factorials in the above expression in terms of $p$-adic Gamma quotients, we obtain
\begin{align*}
&\f{(1+\f r5)_{(3p-r)/5}(\f{2r}{5}-\f{3p}{5})_{(3p-r)/5}(\f{2r}{5}-\f{3ip}{5})_{(3p-r)/5}(\f{1}{2}+\f{3r}{10}-\f{3(1+i)p}{5})_{(3p-r)/5}}{(\f12+\f{r}{10})_{(3p-r)/5}(1)_{(3p-r)/5}^2(\f{r}{5})_{(3p-r)/5}}\\
\eq&\f{12p^4}{25r}\cdot\f{\Gamma_p(\f r5)^2\Gamma_p(\f12+\f{r}{10})^2\Gamma_p(1)^2}{\Gamma_p(\f{2r}{5})^2\Gamma_p(\f12+\f{3r}{10})\Gamma_p(\f12-\f{r}{10})\Gamma_p(1-\f r5)^2}\\
=&\f{12p^4}{25r}\cdot\f{\Gamma_p(\f r5)^4}{\Gamma_p(\f{2r}{5})^2\Gamma_p(\f12+\f{3r}{10})\Gamma_p(\f12-\f{r}{10})^3}\pmod{p^5}.
\end{align*}

Combining the above, we finish the proof.\qed

\section{An open conjecture}

Motivated by Theorem \ref{GLSth1ex} and based on some numerical evidence, we propose the following conjecture.

\begin{conjecture}\label{conj1}
Let $r\leq 1$ be an odd integer coprime with $5$. Let $p$ be and odd prime such that $p\eq r\pmod{5}$ and $p\geq (5-3r)/2$. Then
\begin{equation}\label{conj1eq}
\sum_{k=0}^{p-1}(10k+r)\f{(\f{r}{5})_k^5}{(1)_k^5}\eq\f{p\Gamma_p(\f r5)^4}{\Gamma_p(\f{2r}5)^2\Gamma_p(\f12+\f{3r}{10})\Gamma_p(\f12-\f r{10})^3}\sum_{k=0}^{(1-r)/2}\f{(\f{r-1}{2})_k(\f r5)_k^3}{(1)_k(\f{2r}5)_k^2(\f12+\f{3r}{10})_k}\pmod{p^5}.
\end{equation}
\end{conjecture}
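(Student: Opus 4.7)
The plan is to adapt the proof of Theorem~\ref{GLSth1ex} to the residue class $p\eq r\pmod{5}$ by applying Liu's ${}_7F_6$ transformation (Lemma~\ref{liu}) with
$$m=\f{1-r}{2},\quad t=\f{r}{5},\quad n=\f{p-r}{5},\quad a=\f{r}{10}-\f{1}{2},\quad b=-\f{p}{5},\quad c=-\f{ip}{5}.$$
The choice $a=\f r{10}-\f12$ again gives the cancellation $t-a=1+a$, reducing the ${}_7F_6$ to a ${}_6F_5$. The choices $b=-p/5$ and $c=-ip/5$, together with $-n=r/5-p/5$ and $1-t-m+n+a+b+c=r/5-ip/5$, produce four numerator factors $(\f r5\pm\f p5)_k,\,(\f r5\pm\f{ip}{5})_k$ and four denominator factors $(1\pm\f p5)_k,\,(1\pm\f{ip}{5})_k$ that fit Lemma~\ref{th2lem1} with $v=1/5$. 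Applying Lemma~\ref{th2lem1} to both, together with an analog of Lemma~\ref{th2lem2} for the new truncation $n=(p-r)/5$ (the symmetric reflection $k\mapsto n-k$ applies verbatim, using $(\f r5)_n^5/(1)_n^5\eq1\pmod{p}$; note that $n$ is even because $p\eq r\pmod{10}$, which is forced by $p,r$ both odd with $p\eq r\pmod5$), shows that the left-hand side of \eqref{liueq} simplifies modulo $p^5$ to $\f1r\sum_{k=0}^{p-1}(10k+r)(\f r5)_k^5/(1)_k^5$. The base congruence $\sum_{k=0}^n(10k+r)(\f r5)_k^5/(1)_k^5\eq0\pmod p$ required here follows by reducing Lemma~\ref{liu} modulo $p$, since its right-hand side carries a factor of $p$ from $(1+\f r5)_n=(\f p5)(1+\f r5)_{n-1}$.

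On the right-hand side of \eqref{liueq} we confront the Pochhammer ratio
$$\f{(1+\f r5)_n(\f{2r}5-\f p5)_n(\f{2r}5-\f{ip}5)_n(\f12+\f{3r}{10}-\f{(1+i)p}5)_n}{(\f12+\f{r}{10})_n(1-\f p5)_n(1-\f{ip}5)_n(\f r5-\f{(1+i)p}5)_n}$$
times a ${}_4F_3$ with perturbed parameters. The single factor of $p$ in this ratio again comes from $(1+\f r5)_n=(\f p5)(1+\f r5)_{n-1}$ (the hypothesis $p\gs(5-3r)/2$ keeps all other Pochhammers here $p$-adic units). Expressing the remaining quotients using Lemmas~\ref{padicgamma} and~\ref{padicgammaexpansion}, together with the identity $\Gamma_p(\f12+\f r{10})^2\Gamma_p(\f12-\f r{10})^2=1$ (a consequence of the reflection formula in Lemma~\ref{padicgamma}), one obtains the leading evaluation $\f pr\cdot\f{\Gamma_p(\f r5)^4}{\Gamma_p(\f{2r}5)^2\Gamma_p(\f12+\f{3r}{10})\Gamma_p(\f12-\f r{10})^3}$ modulo $p^5$.

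The main obstacle is to show that the ${}_4F_3$ with perturbed parameters agrees, modulo $p^4$, with the unperturbed sum in \eqref{conj1eq}. In the proof of Theorem~\ref{GLSth1ex} only the analog modulo $p$ was needed, because the prefactor from the Pochhammer ratio was $O(p^4)$; here the prefactor is only $O(p)$, so three further orders of cancellation must be controlled. A useful structural observation is that the perturbed ${}_4F_3$ remains Saalschutzian---the sum of its three lower parameters equals $1$ plus the sum of its four upper parameters, both equal to $(11r+5)/10-2(1+i)p/5$, so the $p$-perturbations balance across numerator and denominator. I expect this Saalschutz balance, combined with Whipple-type contiguous transformations of the balanced terminating ${}_4F_3$ and Wolstenholme-type harmonic-sum congruences for $\sum_j1/(r/5+j)^s$ and $\sum_j1/(1/2+3r/10+j)^s$ with $s=1,2,3$, to force the first-, second-, and third-order corrections in each expansion $(x-\f{\alpha p}5)_k=(x)_k\bigl(1-\f{\alpha p}5H_k(x)+\cdots\bigr)$ to cancel inside the sum. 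Making these cancellations explicit is the technical heart of the proof, and the difficulty of doing so is most likely why the statement was left open.
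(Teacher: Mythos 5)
This statement is left as an open conjecture in the paper: the author explicitly remarks that the method of Theorem \ref{GLSth1ex} yields \eqref{conj1eq} only modulo $p^2$, so there is no proof to compare against, and your proposal must be judged on whether it closes the gap on its own. It does not. Your setup is the natural (and correct) one --- the parameter choice $m=\f{1-r}2$, $t=\f r5$, $n=\f{p-r}5$, $a=\f r{10}-\f12$, $b=-\f p5$, $c=-\f{ip}5$ does reduce the ${}_7F_6$ to a ${}_6F_5$ fitting Lemma \ref{th2lem1}, the reflection argument for the analogue of Lemma \ref{th2lem2} does go through since $(p-r)/5$ is even, and the prefactor on the right-hand side of \eqref{liueq} is indeed exactly $p$ times a $p$-adic unit. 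But that last fact is precisely the problem: because the prefactor is only $O(p)$ rather than $O(p^4)$, you must evaluate both the unit part of the Pochhammer quotient and the perturbed balanced ${}_4F_3$ modulo $p^4$, and your proposal replaces this step with an expectation ("I expect this Saalschutz balance \ldots to force the \ldots corrections to cancel"). The Saalschützian observation is true but does not by itself produce an evaluation --- Pfaff--Saalschütz applies to ${}_3F_2$'s, and balanced terminating ${}_4F_3$'s admit only transformation formulas, not closed forms --- so no mechanism for the required cancellation of the first-, second-, and third-order perturbations is actually exhibited.

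There is also a quantitative shortfall in the tools you invoke: Lemma \ref{padicgammaexpansion} expands $\Gamma_p(x+tp)/\Gamma_p(x)$ only modulo $p^2$, so your claim that Lemmas \ref{padicgamma} and \ref{padicgammaexpansion} yield the "leading evaluation \ldots modulo $p^5$" cannot be correct as stated; one would need the expansion through $G_2$ and $G_3$ (third-order terms), together with congruences for the power sums $\sum_j (r/5+j)^{-s}$ and $\sum_j(\tfrac12+\tfrac{3r}{10}+j)^{-s}$ for $s=1,2,3$ modulo $p^{4-s+1}$, none of which are developed. In short, your proposal reproduces the portion of the argument that the paper already indicates is available (giving the congruence modulo $p^2$) and honestly flags, but does not overcome, the obstruction that makes the statement a conjecture.
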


In a similar way as in the proof of Theorem \ref{GLSth1ex}, we can only prove \eqref{conj1eq} in the modulus $p^2$ case. We hope that an interested reader will make some progress on it.

\begin{Acks}
This work is supported by the National Natural Science Foundation of China (grant no. 12201301).
\end{Acks}


\begin{thebibliography}{99}
\bibitem {AhOn00} S. Ahlgren and K. Ono, A Gaussian hypergeometric series evaluation and Ap\'ery number
congruences, J. Reine Angew. Math. 518 (2000), 187--212.

\bibitem{AAR99} G.E. Andrews, R. Askey and R. Roy, Special Functions, Encyclopedia of Mathematics and its Applications 71, Cambridge University Press, Cambridge, 1999.

\bibitem{DFLST16} A. Deines, J.G. Fuselier, L. Long, H. Swisher and F.-T. Tu, Hypergeometric series, truncated hypergeometric series, and Gaussian hypergeometric functions, Directions in number theory, 125--159, Assoc. Women Math. Ser. 3, Springer, 2016.

\bibitem {GR} G. Gasper and M. Rahman, Basic Hypergeometric Series, 2nd ed., Encyclopedia of Mathematics and
its Applications 96, Cambridge University Press, Cambridge, 2004.

\bibitem{GuoLiu} V.J.W. Guo and J.-C. Liu, Some congruences related to a congruence of Van Hamme, Integral Transforms Spec. Funct. 31 (2020), 221--231.

\bibitem{GLS} V.J.W. Guo, J.-C. Liu and M.J. Schlosser, An extension of a supercongruence of Long and Ramakrishna, Proc. Amer. Math. Soc. 151 (2023), 1157--1166.

\bibitem{GS1} V.J.W. Guo and M.J. Schlosser, A family of $q$-hypergeometric congruences modulo the fourth power of a cyclotomic polynomial, Israel J. Math. 240 (2020), 821--835.

\bibitem{GS} V.J.W. Guo and M.J. Schlosser, Some $q$-supercongruences from transformation formulas for basic hypergeometric series, Constr. Approx. 53 (2021), 155--200.

\bibitem{Liu2017} J.-C. Liu, A $p$-adic supercongruence for truncated hypergeoemtric series ${}_7F_6$, Results Math. 72 (2017), 2057--2066.

\bibitem{Liu2021} J.-C. Liu, Supercongruences arising from transformations of hypergeometric series, J. Math. Anal. Appl. 497 (2021), Art. 124915.

\bibitem{LR} L. Long and R. Ramakrishna, Some supercongruences occurring in truncated hypergeometric series, Adv. Math. 290 (2016), 773--808.

\bibitem{MaoPan2020} G.-S. Mao and H. Pan, On the divisibility of some truncated hypergeometric series, Acta Arith. 195 (2020), 199--206.

\bibitem{MaoPan2022} G.-S. Mao and H. Pan, Congruences corresponding to hypergeometric identities I. ${}_2F_1$ transformations, J. Math. Anal. Appl. 505 (2022), Art. 125527.

\bibitem{Morita} Y. Morita, A $p$-adic analogue of the $\Gamma$-function, J. Fac. Sci. Univ. Tokyo Sect. IA Math. 22 (1975), no. 2, 255--266.

\bibitem{Mortenson} E. Mortenson, A $p$-adic supercongruence conjecture of Van Hamme, Proc. Amer. Math. Soc. 136 (2008), 4321--4328.

\bibitem{PTW} H. Pan, R. Tauraso and C. Wang, A local-global theorem for $p$-adic supercongruences, J. reine angew. Math. 790 (2022), 53--83.

\bibitem{Robert00} A.M. Robert, A Course in $p$-Adic Analysis, Graduate Texts in Mathematics, Vol. 198, Springer-Verlag, New York, 2000.

\bibitem{Sun2011} Z.-W. Sun, Super congruences and Euler numbers, Sci. China Math. 54 (2011), 2509--2535.

\bibitem{WangPan} C. Wang and H. Pan, Supercongruences concerning truncated hypergeometric series, Math. Z. 300 (2022), 161--177.

\bibitem{VanHamme} L. Van Hamme, Some conjectures concerning partial sums of generalized hypergeometric series, $p$-adic functional analysis (Nijmegen, 1996), Lecture Notes in Pure and Appl. Math. 192, Dekker, New York (1997), 223--236.
\end{thebibliography}
\end{document}